	\theoremstyle{nonumberplain}
	\newtheorem{proof}{Proof.}
\newtheorem{theorem}{Theorem}[section]
\newtheorem{lemma}{Lemma}[section]
\newtheorem{definition}{Definition}[section]
\newtheorem{corollary}{Corollary}[section]
\newtheorem{proposition}[theorem]{Proposition}
{
	\theoremheaderfont{\bfseries}
	\theorembodyfont{\normalfont}
	
}
\providecommand{\keywords}[1]
{
	\textbf{\text{Keywords: }} #1
}
\begin{document}
	\title{\bf Asymptotic speeds of spreading for the Lotka-Volterra system with strong competition in $\mathbb{R}^N$} 
	\date{}
	\author{ Hui Bao$^{\dagger}$, Hongjun Guo$^{\dagger}$ \\
		\footnotesize{$^{\dagger}$ School of Mathematical Sciences, Key Laboratory of Intelligent Computing and Applications}\\
		\footnotesize{ (Ministry of Education), Institute for Advanced Study, Tongji University, Shanghai, China}\\}

\renewcommand{\thefootnote}{\fnsymbol{footnote}}
\maketitle

\begin{abstract}
This paper is concerned with the asymptotic 
	spreading behavior of solutions of the Lotka-Volterra 	system with strong competition in $\mathbb{R}^{N}$. Two types of initial conditions are proposed: (C1) two species initially occupy bounded domains; (C2) two species initially occupy the whole space separately. The spreading dynamics for (C1) (C2) is strongly depending on the speeds of traveling fronts of the scalar equations with no competition and the system.  We give the asymptotic speeds of spreading for both (C1) (C2).
\end{abstract}
\keywords{Lotka-Volterra system, Strong
	competition, Spreading speed, Initial support.}

\section{Introduction}
In this paper, we consider the Cauchy problem of the Lotka-Volterra competition-diffusion system in $\mathbb{R}^{N}$, that is, 
\begin{equation}\label{pro1}
	\begin{cases}
		u_{t}=d\Delta u+ru(1-u-av), &t>0,\, x\in \mathbb{R}^{N} ,\\
		v_{t}=\Delta v+v(1-v-bu) ,  &t>0,\, x\in \mathbb{R}^{N},\\
	\end{cases}
\end{equation}
with the initial value
\[u(0, x)=u_{0}(x), ~v(0,x )= v_{0}(x),\quad \text{for}~x\in \mathbb{R}^{N}.
\]
Precise conditions on $u_0(x)$, $v_0(x)$ will be given later.
In the system, $u(t, x)$, $v(t, x)$ represent the population 
densities of two competing species at the 
time $t$ and the 
space position $x$ respectively;
$d$ stands for the diffusion rate of $u$; $r$ represents
the intrinsic growth rate of $u$; $a$ and $b$ are the
competition coefficients
for two species respectively. All the 
parameters are assumed to be positive. Such a system is well-known in describing the dynamics of  competing species.

When there is no competition, i. e. if $u=0$ or $v=0$, 
the system reduces into the following Fisher-KPP equation (which is named by the pioneering works of Fisher \cite{FM} and Kolmogorov, Petrovsky and Piskunov \cite{KPP})
\begin{equation}\label{eq:FKPP}
	\begin{cases}
		w_{t}=D\Delta w+\rho w(1-w), &t>0,\, x\in \mathbb{R}^{N} ,\\
		w(0, x)=w_{0}(x),  &x\in \mathbb{R}^{N},\\
	\end{cases}
\end{equation}
where $D$, $\rho$ are two positive parameters. By recalling classical results of Aronson and Weinberger \cite{Aro},  it is known that there exists a minimal speed $c^{*}=2\sqrt{D\rho}$ such that the equation \eqref{eq:FKPP} exists a traveling front $\phi(x\cdot e-ct)$ for $t\in\mathbb{R}$, $x\in\mathbb{R}^N$ and every $e\in\mathbb{S}^{N-1}$ if and only if $c\ge c^*$. The profile $\phi(\xi)$ and the speed $c$ of the traveling front satisfy the following equation
\begin{equation}\label{into1}
	\begin{cases}
		D\phi''+c\phi'+\rho \phi(1-\phi)=0,  \, ~\text{in}~\xi \in \mathbb{R}
		\\
		\phi(-\infty)=1,\, \phi(+\infty)=0.
	\end{cases}
\end{equation}
By further referring to the results of \cite{Aro},  the solution $w(t,x)$ of \eqref{eq:FKPP} has an asymptotic speed of spreading which is equal to $c^*$ if $w_{0}(x)$ is compactly supported, that is, the following properties hold
\begin{equation*}
	\begin{aligned}
		&\lim_{t\to +\infty}\sup_{|x|\le ct}|1-w(t, x)|=0,
		~\text{for any}~ 0<c< c^{*},\\
		&\lim_{t\to +\infty}\sup_{|x|\ge ct}|w(t, x)|=0,
		~\text{for any}~ c>c^{*}.
	\end{aligned}
\end{equation*}
In the absence of the species $v$ (resp. $u$), let $c_{u}$ (resp. $c_{v}$) be the minimal speed 
associated with parameters $(D, \rho )=(d, r)$ 
(resp. $(D, \rho) = (1, 1)$) for system 
(\ref{into1}).  Then, $c_{u}=2\sqrt{dr}$ and $c_{v}=2$. For convenience, we denote the 
traveling front with the minimal speed of the Fisher-KPP equation
in absence of $v$ (resp. $u$) by $\phi(x\cdot e- c_{u}t)$
(resp. $\psi(x\cdot e- c_{v}t)$).

When the interaction of species (such as competition) is under consideration, the dynamics of solutions of the system become complicated. 
For instance, Iida, Lui and Ninomiya \cite{LN} considered stacked invasion waves in cooperative
systems of $N$-species with equal diffusion coefficients.  They found that species develop into
stacked fronts and spread at different speeds 
under certain conditions.
Besides, Girardin and Lam \cite{GL} considered
the  strong-weak competition case $(a<1<b)$ of (\ref{pro1}) with initial values being null or exponentially decaying in a right half-line. They obtained that 
if the weaker
competitor is also the faster one, then it is able to evade the stronger and slower competitor
by invading first into unoccupied territories. Another interesting discovery is that they found the acceleration phenomena during the period of invasion in some cases. 
Moreover,  Lin and Li \cite{LLi} had taken the weak competition $(a, b<1)$ of (\ref{pro1}) with compactly supported initial values into consideration and they got the spreading speed of the faster species and some estimates for the speed of the slower species.

For the strong competition (bistable) case $(a,\, b>1)$,  Carr\`{e}re \cite{Car} considered when the two strong 
competition species 
are initially absent from the
right half-line $x > 0$, and the slow one dominates
the fast one on $x < 0$. She found that the fast one will invade the
right space at its Fisher-KPP speed, and the slow one will be replaced by or will invade the fast one, depending on the parameters, at a slower speed.  Then, Peng, Wu and Zhou in \cite{PWZ} improved the results of  Carr\`{e}re \cite{Car} and even derived the sharp estimates of the 
spreading speeds based on the technology of \cite{Ham}.

We mention here that the results of Carr\`{e}re \cite{Car} and Peng, Wu and Zhou \cite{PWZ} are for the one-dimensional space. In this paper, we consider the high dimensional case ($N\ge 2$) and focus on the strong competition case as well, that is, we assume
\begin{itemize}
	\item[\textbf{(A1)}] ~(strong competition) the
	competition coefficients $a$, $b>1$.
\end{itemize}
Then, the system (\ref{pro1}) has
two stable steady equilibria $(0, 1)$ and $(1, 0)$.  The traveling front of the system \eqref{pro1} will play a crucial role in the analysis of the sequel. For the one-dimensional system $(N=1)$ of \eqref{pro1}, we refer to works of Gardner \cite{GE} and Kan-On \cite{Kan} for the existence of the traveling front connecting $(0, 1)$ and $(1, 0)$ which can be trivially extended to the high dimensional spaces. Precisely, for the high dimensional case $(N\ge 2)$, there is a unique (up to shifts) traveling front of (\ref{pro1})  with the form
$(u, v)(t,x)=(\Phi(x\cdot e-c_{uv}t),  \Psi(x\cdot e-c_{uv}t))
$ for every $e\in\mathbb{S}^{N-1}$.  The speed $c_{uv}$ satisfies
\[c_{uv}\in (-2, 2\sqrt{dr})
\]
and the profiles $\Phi(\xi)$, $\Psi(\xi)$ satisfy
\begin{equation}\label{pro2}
	\begin{cases}
		d\Phi''+c_{uv}\Phi'+
		r\Phi(1-\Phi-a\Psi)=0,~ &\xi \in \mathbb{R},\\
		\Psi''+c_{uv}\Psi'+\Psi(1-\Psi-b\Phi)=0,~
		&\xi \in \mathbb{R},\\
		\left( \Phi, ~\Psi\right)(-\infty)=(1, 0), ~ 
		\left( \Phi, ~\Psi\right)(+\infty)=(0, 1),\\
		\Phi'<0, ~\Psi'>0,~&\xi \in \mathbb{R}.
	\end{cases}
\end{equation}
One can easily notice that $c_{uv}<c_{u}$. More properties for the profiles $\Phi$, $\Psi$ can be referred to \cite{Kan,KF}.

We also  assume that 
\begin{itemize}
	\item[\textbf{(A2)}] ~the speed $c_{uv}>0$.
\end{itemize}
This assumption means that if we only look at the traveling front $(\Phi(x\cdot e-c_{uv}t),  \Psi(x\cdot e-c_{uv}t))$, the species $u$ is the winner in the competition since its territory, namely the territory of $(1,0)$, is expending at the speed $c_{uv}$ as $t\rightarrow +\infty$. Some sufficient conditions on parameters to ensure $c_{uv}>0$ can be referred to \cite{GLi,Kan} and we also refer to \cite{Gir,GN,MZO,RM} for some related discussions.

We set the initial values $u_0(x)$ and $v_0(x)$ satisfy 
\begin{eqnarray}\label{Initial}
	u_0(x)=\left\{\begin{array}{lll}
		1, &&\hbox{ in } U,\\
		0, &&\hbox{ in } \mathbb{R}^N\setminus U,
	\end{array}
	\right.
	\quad
	v_0(x)=\left\{\begin{array}{lll}
		1, &&\hbox{ in } V,\\
		0, &&\hbox{ in } \mathbb{R}^N\setminus V,
	\end{array}
	\right.
\end{eqnarray}
where $U$ and $V$ are measurable sets. 
For convenience, we assume $U\cap V=\emptyset$. Then, by the comparison principle, one has that $0\le u(t,x)\le 1$ and $0\le v(t,x)\le 1$ for $t\ge 0$ and $x\in\mathbb{R}^N$. 
In this paper, we consider two scenarios:
\begin{itemize}
	\item[\textbf{(C1)}] both $U$ and $V$ are compact sets.
	
	\item[\textbf{(C2)}] $U$ is a general measurable set and $V=\mathbb{R}^N\setminus U$;
\end{itemize}
The first scenario \textbf{(C1)} can be generalized to the case that both $u_0(x)\ge 0$ and $v_0(x)\ge 0$ are compactly supported, corresponding to the second scenarios of \cite{PWZ} for the one-dimensional case, in the cost of some further arguments as in \cite{PWZ}. The second scenario \textbf{(C2)} can also be generalized to more general initial conditions. But we prefer to keep these notations for simplicity of the presentation and coincidence with our forthcoming paper concerning the Fredlin-G\"{a}rtner formula of spreading speeds for general measurable sets $U$ and $V$.

As in \cite{PWZ}, we assume that the species $u$ always spreads sucessfully for scenario \textbf{(C1)} in the sense that
\begin{itemize}
	\item[\textbf{(A3)}] $\lim_{t\rightarrow +\infty} (u,v)(t,x)=(1,0)$ locally uniformly in $\mathbb{R}^N$.
\end{itemize}
This assumption may fail when the initial support $U$ of $u$ is small. But it holds for both \textbf{(C1)} and \textbf{(C2)} when $U$ is large, that is, the following proposition. 

\begin{proposition}\label{proposition1}
	There is $\rho>0$ such that the solution $(u,v)(t,x)$ of \eqref{pro1} associated with $U=B(0,\rho)$ and $V$ being any set with $U\cap V=\emptyset$ satisfies \textbf{(A3)}.
\end{proposition}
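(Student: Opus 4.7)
The first step is to perform the change of variable $V:=1-v$, which turns \eqref{pro1} into the cooperative system
\begin{equation*}
u_{t}=d\Delta u+ru(1-a-u+aV),\qquad V_{t}=\Delta V+(1-V)(bu-V),
\end{equation*}
on $[0,1]^{2}$, whose two stable ODE equilibria are $(0,0)\leftrightarrow(0,1)$ and $(1,1)\leftrightarrow(1,0)$. The bistable front $(\Phi,\Psi)$ from \eqref{pro2} corresponds to the front $(\Phi,1-\Psi)$ in the new variables, invading $(0,0)$ at the positive speed $c_{uv}$ granted by \textbf{(A2)}. Since $U\cap V=\emptyset$, the transformed initial datum satisfies $(u_{0},V_{0})(x)=(1,1)$ for every $x\in B(0,\rho)$, so that proving \textbf{(A3)} is equivalent to showing $(u,V)(t,\cdot)\to(1,1)$ locally uniformly for $\rho$ large enough, and I can freely use the pointwise parabolic comparison principle for cooperative systems.

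\textbf{Sub-solution strategy.} The plan is to exhibit a compactly supported pair $(\underline u_{0},\underline V_{0})\in[0,1]^{2}$ with support in some $B(0,R_{0})$, whose Cauchy evolution $(\underline u,\underline V)$ under the cooperative system converges to $(1,1)$ locally uniformly. Taking any $\rho\ge R_{0}$ then gives $(u_{0},V_{0})\ge(\underline u_{0},\underline V_{0})$ pointwise on $\mathbb{R}^{N}$, and the comparison principle yields $(u,V)(t,\cdot)\ge(\underline u,\underline V)(t,\cdot)$ on $[0,+\infty)\times\mathbb{R}^{N}$, so that passing to the limit $t\to+\infty$ closes the argument.

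\textbf{Constructing the sub-solution.} My candidate is the radial ansatz
\begin{equation*}
(\underline u,\underline V)(t,x)=\bigl(\Phi(|x|-R(t)),\,1-\Psi(|x|-R(t))\bigr),
\end{equation*}
with $R(t)$ a radial front position. Plugging into the cooperative system and using \eqref{pro2} reduces both sub-solution inequalities to the curvature condition $\dot R(t)\le c_{uv}-\max(d,1)(N-1)/|x|$. I would therefore pick $R(t)$ solving $\dot R=c_{uv}-\max(d,1)(N-1)/R$ with $R(0)$ large enough to keep $\dot R>0$, giving $R(t)\uparrow+\infty$ and $R(t)/t\to c_{uv}$. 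To extend the ansatz to a global sub-solution valid all the way to $|x|=0$, I would patch it on an inner ball with the stationary sub-solution $(1,1)$ (which is an equilibrium) by taking the componentwise maximum; since the supremum of sub-solutions remains a sub-solution in a cooperative system, the patched function is a bona fide global sub-solution, bounded above by $(1,1)$, which converges pointwise to $(1,1)$ on every compact set as $t\to+\infty$.

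\textbf{Main obstacle.} The most delicate point is precisely this matching near $|x|=0$: for small $|x|$ the curvature term $\max(d,1)(N-1)/|x|$ blows up, so the naive ansatz is not a classical sub-solution there, and the patched function is only Lipschitz across the interface. The inequalities therefore have to be verified in the viscosity (equivalently, distributional) sense, using the fact that $(\Phi,1-\Psi)$ converges exponentially to $(1,1)$ at $-\infty$, so that the ansatz agrees with $(1,1)$ to any prescribed accuracy on the matching sphere once $R(t)$ is sufficiently large. Once this is done carefully, the support size $R_{0}$ is determined, and choosing $\rho\ge R_{0}$ yields \textbf{(A3)}.
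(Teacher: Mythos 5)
Your reduction to a cooperative system and the overall comparison-principle strategy — build a compactly supported, radially expanding subsolution converging to the stable state and compare — is the same spirit as the paper's proof, which constructs such a subsolution in Lemma~\ref{lemma:sub} and cites it. But two steps in your construction do not go through as written.

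First, the patching near $|x|=0$ is vacuous. Since $\Phi\le 1$ and $1-\Psi\le 1$ pointwise, the componentwise maximum of $\bigl(\Phi(|x|-R(t)),\,1-\Psi(|x|-R(t))\bigr)$ with the constant $(1,1)$ is identically $(1,1)$; you do not get a front profile on an outer annulus patched onto a plateau on an inner ball, you simply get the constant equilibrium. The paper handles the curvature singularity in a structurally different way: it feeds $h_\varepsilon(|x|)$ rather than $|x|$ into the front profile, where $h_\varepsilon$ is a $C^2$ function with $h_\varepsilon'\equiv 0$ near the origin and $h_\varepsilon(r)=r$ for $r\ge H_\varepsilon$, chosen so that $\max\{d,1\}\bigl(h_\varepsilon''+\tfrac{N-1}{r}h_\varepsilon'\bigr)\le\varepsilon/2$ on all of $[0,\infty)$. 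No patching is needed; the singular curvature term simply never appears.

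Second, even setting the origin aside, your ansatz is strictly positive in both components for every $x\in\mathbb{R}^N$, hence it can never lie below the initial datum. In the worst case $V=\mathbb{R}^N\setminus B(0,\rho)$ the transformed initial datum is $(u_0,V_0)=(0,0)$ outside $B(0,\rho)$, so the comparison at $t=0$ already fails: the stated plan ``exhibit a compactly supported pair with support in $B(0,R_0)$'' is not realized by the construction. The paper resolves this by subtracting time-decaying corrections $p_i(\xi)\,\delta e^{-\mu t}$ from the shifted profiles and truncating with $\max\{\cdot,0\}$, $\min\{\cdot,1\}$; at $t=0$ and large $|x|$ the truncated pair is exactly $(0,1)$ in $(u,v)$-variables, and the same corrections also supply the sign margin needed to close the subsolution inequalities in the finite transition zone $|\xi|\le M$, where the profile equations alone give nothing definite. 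Without an analogue of these perturbations, both the comparison at $t=0$ and the verification of the differential inequality near the interface are missing.
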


%
%
%

Then, we first consider the initial 
condition \textbf{(C1)} which can be interpreted as
both $u$ and $v$ are invasive
species that initially occupy only bounded domains. In this case, 
the speeds $c_{u}$ and $c_{v}$ are key roles to determine the spreading dynamics of solutions.

When $c_{u}>c_{v}$, the following result reveals that $u$ will always  be in front of $v$ and the 
competition winner assumption \textbf{(A2)}
derives that
$v$ will extinct in the whole space
$\mathbb{R}^{N}$ in the long-run.

\begin{theorem}\label{thm2}
	Assume that \textbf{(A1)}-\textbf{(A3)} and \textbf{(C1)}
	hold. If $c_{u}>c_{v}$,
	then there holds that
	\begin{equation*}
		\begin{aligned}
			&\lim_{t\to +\infty}\sup_{|x|\le ct}
			|u(t, x)-1| =0,\quad \text{for}~
			0<c<c_{u},\\
			&\lim_{t\to +\infty}\sup_{|x|\ge ct}
			|u(t, x)| =0,\quad \text{for}~
			c>c_{u},
		\end{aligned}
	\end{equation*}
	and 
	\begin{equation*}
		\lim_{t\to +\infty}\sup_{x\in\mathbb{R}^N} |v(t, x)|=0.
	\end{equation*}
\end{theorem}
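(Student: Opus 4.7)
The plan is to combine three ingredients: scalar Fisher--KPP comparisons (upper envelopes for both species), assumption \textbf{(A3)} together with the spreading gap $c_v<c_u$ (lower bound for $u$ at speed arbitrarily close to $c_u$), and the strong-competition inequality $b>1$ (forcing $v$ to extinction once $u$ is close to $1$). The two upper bounds come for free: since $u_t\le d\Delta u+ru(1-u)$ and $v_t\le \Delta v+v(1-v)$ by dropping the nonnegative competition terms, and since $u_0,v_0$ are compactly supported under \textbf{(C1)}, the classical Aronson--Weinberger theorem applied to each scalar Fisher--KPP equation gives $\sup_{|x|\ge ct}u(t,x)\to 0$ for every $c>c_u$ (this is already the second conclusion of the theorem) and $\sup_{|x|\ge c't}v(t,x)\to 0$ for every $c'>c_v$.

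For the lower bound $\sup_{|x|\le ct}|u(t,x)-1|\to 0$ with $c<c_u$, fix $\varepsilon>0$ and choose $\eta>0$ with $a\eta<\varepsilon$ and $c_\eta:=2\sqrt{dr(1-a\eta)}>c$, then pick $c_1\in(c_v,c_\eta)$. By the upper bound on $v$ there is $T_0$ with $v(t,x)\le \eta$ on $\{|x|\ge c_1 t,\ t\ge T_0\}$. On this outer region $u$ is a supersolution of the scalar Fisher--KPP equation $w_t=d\Delta w+rw(1-a\eta-w)$, whose minimal KPP speed is $c_\eta>c$. I would then construct a compactly supported radial subsolution $\underline w$ of this KPP equation, initialised at some large $T_1\ge T_0$ by $(1-\eta)\chi_{B(0,R_1)}$, and derive $u\ge \underline w$ on an appropriate forward-in-time region; the classical Aronson--Weinberger spreading result for $\underline w$ then forces $\underline w(t,x)\to 1-a\eta$ uniformly on $|x|\le (c_\eta-\delta)t$ for every $\delta>0$, so $u(t,x)\ge 1-a\eta\ge 1-\varepsilon$ on $|x|\le ct$ for $t$ large. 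The main technical obstacle is precisely in justifying the comparison $u\ge \underline w$: one cannot apply the maximum principle directly on the exterior $\{|x|\ge c_1 t\}$ because the inner lateral boundary data $u(t,c_1 te)$ is not controlled by \textbf{(A3)} alone (it asserts only locally uniform, not ``annulus-uniform'', convergence). I would resolve this by first using the system's own traveling front $(\Phi,\Psi)$ of \eqref{pro2}, together with \textbf{(A2)} and \textbf{(A3)}, to show that $u$ remains close to $1$ on any ball expanding at a speed below $c_{uv}$; then, feeding this into the annular KPP comparison above and iterating finitely many times (which is needed when $c_{uv}<c_v$), one bootstraps the plateau from speed $c_{uv}$ up to any speed $c<c_u$. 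In spirit this follows the one-dimensional arguments of Carr\`{e}re \cite{Car} and Peng, Wu and Zhou \cite{PWZ}, but extra care is required because of the higher-dimensional geometry.

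Once the lower bound on $u$ is in hand, the uniform extinction $\sup_{x}v(t,x)\to 0$ follows from a straightforward maximum principle argument. Given $\varepsilon>0$, choose $\eta>0$ with $\lambda:=b(1-\eta)-1>0$ and $c'\in(c_v,c_u)$, and pick $T$ large enough that $u(t,x)\ge 1-\eta$ on $\{|x|\le c't,\ t\ge T\}$ (by the lower bound just established) and $v(t,x)\le \varepsilon$ on $\{|x|\ge c't,\ t\ge T\}$ (by the scalar upper bound on $v$). On the forward cone $Q_T:=\{(t,x):t\ge T,\ |x|\le c't\}$, the inequality $u\ge 1-\eta$ gives $v_t-\Delta v+\lambda v\le 0$, and the spatially constant function $\bar v(t):=e^{-\lambda(t-T)}+\varepsilon$ satisfies $\bar v_t-\Delta \bar v+\lambda \bar v=\lambda\varepsilon\ge 0$ and dominates $v$ on the parabolic boundary of $Q_T$ (at $t=T$ since $\bar v(T)=1+\varepsilon\ge v(T,\cdot)$, and on $|x|=c't$ since $\bar v(t)\ge \varepsilon\ge v(t,c'te)$). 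Comparison on $Q_T$ yields $v(t,x)\le e^{-\lambda(t-T)}+\varepsilon$ there, and combining with $v\le \varepsilon$ on the complement gives $\limsup_{t\to\infty}\sup_{x\in\mathbb{R}^N}v(t,x)\le \varepsilon$; letting $\varepsilon\to 0$ concludes the proof.
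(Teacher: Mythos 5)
Your proposal follows essentially the same route as the paper: the same scalar Fisher--KPP comparisons give the $c_u$ upper spreading bound for $u$ and the $c_v$ upper bound for $v$ (the paper's Lemma~\ref{lemma:(0,0)}), and, exactly as you anticipate, the core of the argument is (i) a plateau $(u,v)\approx(1,0)$ expanding at the bistable speed $c_{uv}$, built from the system front $(\Phi,\Psi)$ together with \textbf{(A2)}, \textbf{(A3)} (the paper's Lemma~\ref{lemma:sub}), (ii) an annular result $(u,v)\to(1,0)$ on $c_1 t\le|x|\le c_2 t$ for $c_v<c_1<c_2<c_u$ obtained from a perturbed-KPP analysis (the paper's Lemma~\ref{lemm2}/Corollary~\ref{cu2}, proved \emph{\`a la} \cite{DGM}), and (iii) an iteration splicing these two. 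Two remarks on how your sketch diverges from the paper. First, the iteration in the paper does not ``bootstrap from speed $c_{uv}$ up to $c<c_u$'' by repeated perturbed-KPP comparisons; instead it re-seeds the Lemma~\ref{lemma:sub} subsolution at points $x_0$ lying in the already-established annulus at a fixed time $T_1$, obtaining a plateau on $(c_v+\varepsilon)T_1\le|x|\le(c_u-\varepsilon)t$ whose inner radius is \emph{fixed} in time; the gap $|x|<(c_v+\varepsilon)T_1$ is then swallowed by the $c_{uv}$-plateau for $t$ large, so there is no need to close the interval of speeds $(c_{uv},c_v)$ directly. Your description of the iteration is vaguer and, as written, the perturbed-KPP comparison alone is not enough to cross that speed interval, since the lateral boundary $|x|=(c_v+\varepsilon)t$ lies strictly outside the $c_{uv}$-plateau; so the resolution really does need the re-seeded front subsolutions, not just the scalar KPP machinery. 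Second, your maximum-principle argument on the forward cone for $\sup_x v\to 0$ is correct, but it is redundant: the Lemma~\ref{lemma:sub} subsolution and the ensuing iteration already control $v$ (they give $(u,v)\to(1,0)$ jointly), so $v\to 0$ on $|x|\le(c_u-\varepsilon)t$ comes for free, and combined with the scalar bound $v\to 0$ on $|x|\ge ct$ for any $c>c_v$ (with $c<c_u-\varepsilon$) this already covers all of $\mathbb{R}^N$, which is how the paper concludes.
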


We then turn to the other case $c_{v}>c_{u}$, namely $c_{v}>c_{u}>c_{uv}$. In this case, since
the speed of $v$ is faster than the speed of $u$,  $v$ will spread at the speed $c_{v}$ since there is no competition (or very low competition) at the spreading front of $v$ and
the competition arises far behind the spreading front of $v$.  Moreover, the competition will lead that $u$ spreads at the slower speed $c_{uv}$.

\begin{theorem}\label{thm3}
	Assume that \textbf{(A1)}-\textbf{(A3)} and \textbf{(C1)}
	hold. 
	If $c_{v}>c_{u}$, then there holds that
	\begin{equation*}
		\begin{aligned}
			&\lim_{t\to +\infty}\sup_{|x|\le ct}
			\left\lbrace |u(t, x)-1| +
			|v(t, x)|\right\rbrace =0,\quad \text{for}~
			0<c<c_{uv},\\
			&\lim_{t\to +\infty}\sup_{c_{1}t\le|x|\le c_{2}t}
			\left\lbrace |u(t, x)| +|v(t, x)-1|\right\rbrace 
			=0,\quad\text{for}~
			c_{uv}<c_{1}\le c_{2}<c_{v},\\
			&\lim_{t\to +\infty}\sup_{|x|\ge ct}
			\left\lbrace |u(t, x)| +
			|v(t, x)|\right\rbrace =0,\quad\text{for}~
			c>c_{v}.
		\end{aligned}
	\end{equation*}
\end{theorem}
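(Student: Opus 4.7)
The three limits partition $\mathbb{R}^{N}$ into an outer region $\{|x|>c_{v}t\}$, a middle annulus $\{c_{uv}t<|x|<c_{v}t\}$ and an inner ball $\{|x|<c_{uv}t\}$. I would establish them in that order, building radially symmetric super/sub-solutions from the scalar KPP fronts $\phi,\psi$ and from the bistable system front $(\Phi,\Psi)$ of \eqref{pro2}, applied via the competitive comparison principle (i.e.\ the cooperative formulation in the variables $(u,1-v)$).

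\textit{Outer region.} Dropping the non-positive competition terms gives $u_{t}\le d\Delta u+ru(1-u)$ and $v_{t}\le\Delta v+v(1-v)$. Comparing with the scalar Fisher--KPP solutions from $u_{0}$ and $v_{0}$ (Aronson--Weinberger \cite{Aro}) yields uniform decay of $u$ in $\{|x|\ge(c_{u}+\epsilon)t\}$ and of $v$ in $\{|x|\ge(c_{v}+\epsilon)t\}$. Since $c_{u}<c_{v}<c$, both components vanish.

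\textit{Middle annulus.} I first show $v\to 1$. From the outer step, $u\to 0$ on $\{|x|\ge(c_{u}+\epsilon)t\}$, so there $v$ satisfies $v_{t}\ge\Delta v+v(1-b\delta-v)$ for any $\delta>0$ once $t$ is large; a radial KPP sub-solution of speed close to $2\sqrt{1-b\delta}\to c_{v}$ then gives $v\ge 1-O(\delta)$ in any annulus $(c_{u}+\epsilon)t\le|x|\le(c_{v}-\epsilon)t$. To push the inner radius down to $(c_{uv}+\epsilon)t$ I bootstrap: wherever $v\ge 1-\eta$ with $a(1-\eta)>1$, the $u$-equation gives $u_{t}\le d\Delta u-r(a-1-a\eta)u$ and hence exponential-in-time decay of $u$, so $u\to 0$ on the enlarged set $\{|x|\ge(c_{uv}+\epsilon)t\}$; re-applying the $v$ sub-solution then yields $v\to 1$ in the full annulus. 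The complementary statement $u\to 0$ follows from a radial super/sub-solution pair $(\bar u,\bar v)(t,x)=(\Phi(|x|-c't-A),\Psi(|x|-c't-A))$ with $c'\in(c_{uv},c_{1})$: substitution in \eqref{pro1} produces the residuals $(c_{uv}-c')\Phi'-\tfrac{d(N-1)}{|x|}\Phi'$ and $(c_{uv}-c')\Psi'-\tfrac{N-1}{|x|}\Psi'$, both of which have the favorable sign on $\{|x|>0\}$ because $\Phi'<0$, $\Psi'>0$ and $c'>c_{uv}$. The initial comparison at a large time $t_{0}$ is carried out on $\{|x|\le(c_{v}-\epsilon)t\}$: the lower bound on $v$ just obtained yields $\bar v\le v$, and choosing $A$ large enough (depending on $t_{0}$) yields $\bar u\ge u$.

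\textit{Inner ball, and main obstacle.} The convergence $(u,v)\to(1,0)$ in $\{|x|\le ct,\ c<c_{uv}\}$ follows from the symmetric construction with sub/super-pair $(\underline u,\underline v)=(\Phi(|x|-c''t-A),\Psi(|x|-c''t-A))$, $c''\in(c,c_{uv})$; here the curvature contribution now has the \emph{wrong} sign, so the sub/super inequality holds only for $|x|\ge r_{0}:=d(N-1)/(c_{uv}-c'')$, and assumption \textbf{(A3)} (supplied by Proposition \ref{proposition1} when $U$ is large) is used to obtain $u\ge 1-\eta$, $v\le\eta$ on $B_{r_{0}}$ as a boundary condition for the comparison on the complement. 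The technical heart of the argument is the bootstrap in the middle-annulus step, which upgrades the scalar KPP bound ``$u\to 0$ beyond $c_{u}t$'' to the sharper bistable bound ``$u\to 0$ beyond $c_{uv}t$''; in one spatial dimension this is exactly the content of Peng--Wu--Zhou \cite{PWZ}, and in $\mathbb{R}^{N}$ it is further complicated by the curvature corrections and by the need to glue several super/sub-solutions on annuli whose inner and outer radii both grow linearly in $t$.
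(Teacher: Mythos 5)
You have the right decomposition (outer region, middle annulus, inner ball) and the right ingredients, and the outer-region step is exactly the paper's Lemma~\ref{lemma:(0,0)}. The gap is in the middle annulus, and it is a genuine one. Your ``bootstrap'' does not actually push the inner radius of the $(0,1)$-region from $c_u t$ down to $c_{uv}t$. Knowing $v\ge 1-\eta$ with $a(1-\eta)>1$ only on $\{(c_u+\varepsilon)t\le|x|\le(c_v-\varepsilon)t\}$ yields the inequality $u_t\le d\Delta u - r(a(1-\eta)-1)u$ on that same annulus, hence exponential decay of $u$ \emph{there}, but gives no information about $u$ on the slab $\{(c_{uv}+\varepsilon)t\le|x|<(c_u+\varepsilon)t\}$, which is exactly the region you are trying to absorb. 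Re-applying the scalar KPP subsolution for $v$ does not help either: the $v$-front moves outward at speed $c_v$, so it cannot decrease the inner radius of the $v$-large annulus below $c_u t$. The speed $c_{uv}$ for the inner boundary is a coupled bistable effect that must be extracted from a system sub/super-solution, not from a scalar decay estimate.

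Your fall-back, the traveling bistable supersolution $(\bar u,\bar v)=(\Phi(|x|-c't-A),\Psi(|x|-c't-A))$, is the right object, but the comparison as set up does not close. Comparing on the moving set $\{|x|\le(c_v-\varepsilon)t\}$ requires the ordering on the parabolic boundary $|x|=(c_v-\varepsilon)t$ for all $t\ge t_0$; there $\bar v=\Psi((c_v-\varepsilon-c')t-A)\to1$ as $t\to\infty$, while you only know $v\ge1-\eta$ for a fixed $\eta>0$, so $\bar v\le v$ eventually fails. Comparing on all of $\mathbb{R}^N$ fails in the opposite way since for $|x|\gg c_v t$ one has $\bar v\approx1$ but $v\approx0$. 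The paper avoids this dichotomy: Lemma~\ref{lem1} builds a supersolution $(\overline u_R,\underline v_R)$ from the shifted bistable front with additional perturbations $\pm p_i(\cdot)\delta e^{-\mu t}$ and a shift $\omega\delta e^{-\mu t}$, starting from $(\delta,1-\delta)$ on $B(0,R)$ and $(1,0)$ outside, so that it is a genuine supersolution on \emph{all} of $\mathbb{R}^N$ but only on the \emph{finite} time window $0\le t\le(R-R_\varepsilon)/(c_{uv}+\varepsilon)$. The proof of Theorem~\ref{thm3} then places such balls inside the annulus furnished by Lemma~\ref{lemm2} at time $T$, runs the supersolution for a time $\tau$ during which the $(0,1)$-state persists up to a ball shrinking at speed $c_{uv}+\varepsilon$, checks that the outer boundary of the shrunken region meets the inner boundary of the new KPP annulus at time $T+\tau$, and iterates. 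It is this finite-window-plus-gluing argument that achieves what your bootstrap aimed at. A milder instance of the same issue affects your inner-ball step: without the $e^{-\mu t}$ corrections, $\Phi(r_0-c''t-A)\to1$ at a rate you cannot compare against the (unquantified) rate in \textbf{(A3)}, so the ordering at $|x|=r_0$ is not guaranteed; the paper's Lemma~\ref{lemma:sub} needs the $\pm p_i\delta e^{-\mu t}$ terms and the $h_\varepsilon$-regularization precisely to make this comparison robust.
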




Then, we consider the scenario \textbf{(C2)}. The biological interpretation of this condition is that $u$ and $v$ initially occupy the whole space $\mathbb{R}^N$ separately. In this case, the set $U$ is assumed to be a general measurable set which could be bounded or unbounded. When $U$ is bounded, one can expect from results of the one-dimensional case and the scalar equation that the solution has an asymptotic speed independent of $U$. However, when $U$ is unbounded, the spreading of the solution keeps a memory of the set $U$ in some sense, see the work of Hamel and Rossi \cite{HR} for the scalar equation. We borrow some notions from \cite{HR}. 

\begin{definition}\cite{HR}\label{deuv}
The set of bounded directions of  $U$ and the set of unbounded directions of $U$ are given by	
	\begin{equation*}
		\mathcal{B}(U):=\left\lbrace 
		\xi\in \mathbb{S}^{N-1}:\liminf_{\tau\to
			+\infty}\frac{\text{dist}
			(\tau \xi , U)}{\tau}>0\right\rbrace 
	\end{equation*}
	and 
	\begin{equation*}
		\mathcal{U}(U):=\left\lbrace 
		\xi\in \mathbb{S}^{N-1}:\lim_{\tau\to
			+\infty}\frac{\text{dist}
			(\tau \xi , U)}{\tau}=0 \right\rbrace,
	\end{equation*}
	respectively.
\end{definition}

Roughly speaking, the set $U$
	is bounded around bounded directions and  unbounded around unbounded
		directions. The sets $\mathcal{B}(U)$ and $\mathcal{U}(U)$ are 
respectively open and closed relatively to $\mathbb{S}^{N-1}$.
The condition
$\xi \in \mathcal{B}(U)$ is equivalent to 
the existence of a open cone   $\mathcal{C}$ 
containing the ray 
$\mathbb{R}^{+}{\xi}=\left\lbrace \tau \xi:
\tau>0\right\rbrace $
such that $U\cap\mathcal{C}$ is bounded, namely, 
$\mathbb{R}^{+}\xi \subset \mathcal{C}\subset
(\mathbb{R}^{N}\setminus U)\cup B_{R}$ for some $R>0$.
Conversely, for any $\xi \in \mathcal{U}(U)$ indicates that 
for any $\mathcal{C}$ containing the ray 
$\mathbb{R}^{+}{\xi}=\left\lbrace \tau \xi:
\tau>0\right\rbrace $, there holds $U\cap\mathcal{C}$ 
is unbounded.
We also define the notion of
positive distance-interior
$U_{\rho}$ (with $\rho>0$) 
of the set $U$ as
\[
U_{\rho}:=\left\lbrace x\in U: 
\text{dist}(x, \partial U)\ge\rho \right\rbrace. 
\]

Now we present spreading results for the initial condition
\textbf{(C2)}. 

\begin{theorem}\label{thm2.4}
	Assume  \textbf{(A1)}-\textbf{(A2)} and \textbf{(C2)} hold.
	Assume that the initial support $U$
	satisfies $U_{\rho}\neq \emptyset$ and 
	\begin{equation}\label{ABcon}
		\mathcal{B}(U)\cup
		\mathcal{U}(U_{\rho})=
		\mathbb{S}^{N-1},
	\end{equation}
	where $\rho$ is given by Proposition~\ref{proposition1}.
	Then for every $e\in \mathbb{S}^{N-1}$, there holds the following results:
	\begin{equation}\label{26}
		\begin{aligned}
			&\lim_{t\to +\infty}\sup_{0\le s\le c}\left| u(t, ste)-1
			\right| +\left| v(t, ste)\right| =0,\quad \text{for}~~0\le c<w(e),\\
			&\lim_{t\to +\infty}\sup_{ s\ge c}\left| u(t, ste)
			\right| +\left| v(t, ste)-1\right| =0,\quad \text{for}~~ c>w(e),
		\end{aligned}
	\end{equation}
	where $w(e)$ is given explicitly by the variational
	formula
	\begin{equation}\label{unuve}
		w(e)=\sup_{\xi\in \mathcal{U}
			(U), ~\xi\cdot e\ge 0}
		\frac{c_{uv}}{\sqrt{1-(\xi\cdot e)^{2}}}
	\end{equation}
	with the conventions
	\begin{equation}\label{unge}
		\begin{cases}
			w(e)=c_{uv}&\text{if there is no}~\xi \in 
			\mathcal{U}(U)~\text{such that}~\xi\cdot e\ge 0,\\
			w(e)=+\infty,&\text{if}~e\in \mathcal{U}(U).
		\end{cases}
	\end{equation}
\end{theorem}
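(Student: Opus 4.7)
The plan is to establish the two spreading regimes by exploiting a single geometric picture: launch pads inside $U_\rho$ emit radial bistable waves at asymptotic speed $c_{uv}$, and enveloping these in direction $e$ produces the variational formula \eqref{unuve}. A key preliminary observation is that under the dichotomy \eqref{ABcon}, one has $\mathcal{U}(U)=\mathcal{U}(U_\rho)$: if $\xi\in \mathcal{U}(U)$ then $\liminf_\tau \text{dist}(\tau\xi,U)/\tau=0$, so $\xi\notin \mathcal{B}(U)$, and \eqref{ABcon} forces $\xi\in \mathcal{U}(U_\rho)$; the reverse inclusion is immediate from $U_\rho\subset U$. Consequently every direction $\xi$ appearing in \eqref{unuve} admits a sequence $y_n\in U_\rho$ with $y_n=\tau_n\xi+o(\tau_n)$, $\tau_n\to+\infty$, i.e.\ genuine launch pads with $B(y_n,\rho)\subset U$.

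For the lower bound (first line of \eqref{26}), I would fix such a $\xi$ with $\xi\cdot e\ge 0$ and apply Proposition~\ref{proposition1} translated to each $y_n$: by the competition comparison principle (order $u_1\le u_2$, $v_1\ge v_2$), $(u,v)(T,\cdot)$ is arbitrarily close to $(1,0)$ on any prescribed neighbourhood of $y_n$ once $T$ is large. Plugging this as seed data into the standard bistable radial sub-solution built from the profile $(\Phi,\Psi)$ of \eqref{pro2} yields $(u,v)\to(1,0)$ uniformly on $B(y_n,(c_{uv}-\delta)(t-T))$ for each $\delta>0$. The elementary optimization $\min_{\tau\ge 0}|ste-\tau\xi|=s\sqrt{1-(e\cdot\xi)^2}$, attained at $\tau=s(e\cdot\xi)$, then shows that the union of these balls eventually covers the ray segment $\{se:0\le s\le c_{uv}t/\sqrt{1-(e\cdot\xi)^2}\}$; supping over $\xi$ and sending $\delta\downarrow 0$ recovers the threshold $w(e)$, while the convention $w(e)=+\infty$ for $e\in \mathcal{U}(U)$ is covered by taking $\xi=e$ with launch pads lying directly along the ray.

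For the upper bound (second line of \eqref{26}) it suffices to treat $e\in \mathcal{B}(U)$, since otherwise $e\in \mathcal{U}(U)$ and $w(e)=+\infty$ leaves nothing to prove. Pick an open cone $\mathcal{C}$ around $\mathbb{R}^+ e$ with $U\cap\mathcal{C}\subset B_R$. The aim is to bound $(u,v)$ from the $(0,1)$-side by an envelope of planar pairs $(\Phi(x\cdot\xi-c_{uv}t-K_\xi),\Psi(x\cdot\xi-c_{uv}t-K_\xi))$ associated with the approximate half-space normals for $U$ (the bounded defect of $U$ in each half-space being absorbed into the shift $K_\xi$); the same geometric calculation $s\sqrt{1-(e\cdot\xi)^2}>c_{uv}$ for every $\xi\in \mathcal{U}(U)$ with $\xi\cdot e\ge 0$ then places $x=ste$ with $s>w(e)$ strictly ahead of every such planar front at time $t$, forcing $u(t,x)\to 0$ and $v(t,x)\to 1$.

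The delicate step will be this upper-bound construction: the profiles satisfy $0<\Phi,\Psi<1$ and so cannot dominate the characteristic data pointwise, and the system is not cooperative in $(u,v)$. The standard remedy is to pass to the cooperative variables $(u,1-v)$, calibrate the shifts $K_\xi$ so that a truncated union of planar pairs sits in the correct competition order with $(u_0,v_0)$ outside $B_R$, and handle the bounded piece $U\cap B_R$ with a radial bistable super-solution, matching the two constructions along $\partial\mathcal{C}$. Controlling the dependence of $K_\xi$ on $\xi$ so that the envelope remains a true super-solution is the principal technical point.
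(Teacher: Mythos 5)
Your lower-bound argument is essentially the paper's: relate $\mathcal{U}(U)=\mathcal{U}(U_\rho)$ using \eqref{ABcon}, then use $U_\rho$ as a collection of launch pads and compare with a radially symmetric bistable sub/supersolution pair (the paper's Lemma~\ref{lemma:sub}), then optimize $\min_{\tau\ge 0}|ste-\tau\xi|=s\sqrt{1-(e\cdot\xi)^2}$ to recover $w(e)$. This part is sound and matches the paper.

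The gap is in your upper bound. You propose to dominate $(u,1-v)$ by an \emph{envelope of planar} front pairs $(\Phi(x\cdot\xi-c_{uv}t-K_\xi),\Psi(\cdots))$ and to reconcile this with the characteristic initial data by calibrating shifts $K_\xi$ and gluing in a radial supersolution near $U\cap B_R$. You explicitly flag that ``controlling the dependence of $K_\xi$ on $\xi$ so that the envelope remains a true super-solution is the principal technical point'' --- but you do not resolve it, and it is exactly where the plan is weakest: there is no reason the shift required to dominate $\mathbf{1}_U$ along the defect of a half-space $\{x\cdot\xi\le 0\}$ should be bounded uniformly in $\xi\in\mathcal{U}(U)$, and the min of a continuum of classical supersolutions need not inherit the necessary regularity. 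The paper avoids the planar-envelope construction entirely: it observes (their Lemma~\ref{unlem}, borrowed from Hamel--Rossi, combined with $e\in\mathcal{B}(U)$) that for $\tau$ large a full ball $B(\tau e, k\tau)$ with $k>c_{uv}/w(e)$ sits inside $\mathbb{R}^N\setminus U$, and then inserts a \emph{single radially symmetric supersolution pair} $(\overline{u}_{k\tau},\underline{v}_{k\tau})$ built on that ball (their Lemma~\ref{lem1}): a curved bistable front shrinking at speed $c_{uv}+\varepsilon$, so that the region where $(u,v)\approx(0,1)$ persists long enough to cover the target point $ste$. This requires only the one ball and one supersolution, no envelope, no $\xi$-dependent shifts, and no matching across $\partial\mathcal{C}$. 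To repair your argument you would either have to supply a uniform bound on $K_\xi$ over the relevant cone of directions (which does not follow from \eqref{ABcon}) or switch, as the paper does, to the inscribed-ball geometry and the radial supersolution of Lemma~\ref{lem1}.
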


As far as the uniformity of the convergence with respect to $e\in\mathbb{S}^{N-1}$ in Theorem~\ref{thm2.4}  is concerned, we have the following theorem.

\begin{theorem}\label{ASSset}
	Under the same assumptions in Theorem~\ref{thm2.4},  it holds that for any 
	compact sets $\mathcal{C}\subset \mathbb{R}^{N}$,
	\begin{equation}\label{28}
		\begin{aligned}
			&\lim_{t\to +\infty}\sup_{x\in \mathcal{C}
			}\left| u(t, tx)-1
			\right| +\left| v(t, tx)\right| =0~~\text{for}~~\mathcal{C}\subset
			\mathcal{W},\\
			&\lim_{t\to +\infty}\sup_{ x\in
				\mathcal{C}}\left| u(t, tx)
			\right| +\left| v(t, tx)-1\right| =0~~\text{for}~~ \mathcal{C}\subset
			\mathbb{R}^{N}\setminus \overline{\mathcal{W}},
		\end{aligned}
	\end{equation}
	where $\mathcal{W}$ is the envelop set
	defined as
	\begin{equation*}
		\mathcal{W}:=\left\lbrace re:e\in 
		\mathbb{S}^{N-1}, 0\le r<w(e)\right\rbrace,
	\end{equation*}
	which has the following expression
	\begin{equation}\label{expressionW}
		\mathcal{W}=\mathbb{R}^{+}\mathcal{U}
		(U)+B(0,c_{uv})
	\end{equation}
	(with the convention that
	$\mathbb{R}^{+}\emptyset
	+B(0,c_{uv})=B(0,c_{uv})$).
\end{theorem}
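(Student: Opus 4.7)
The proof has two independent pieces: the geometric identity (\ref{expressionW}), and the uniform upgrade of Theorem~\ref{thm2.4} to compact subsets of $\mathcal{W}$ and $\mathbb{R}^{N}\setminus\overline{\mathcal{W}}$ respectively.

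For (\ref{expressionW}), I would argue by double inclusion. Take $x=re\in\mathcal{W}$ with $r=|x|<w(e)$. If $e\in\mathcal{U}(U)$ then $w(e)=+\infty$ and the decomposition $x=(re)+0$ settles the claim. If no $\xi\in\mathcal{U}(U)$ satisfies $\xi\cdot e\ge 0$, then $w(e)=c_{uv}$ by convention (\ref{unge}) and $x\in B(0,c_{uv})$. Otherwise, (\ref{unuve}) furnishes $\xi\in\mathcal{U}(U)$ with $\xi\cdot e\ge 0$ and $r<c_{uv}/\sqrt{1-(\xi\cdot e)^{2}}$; setting $s=r(\xi\cdot e)\ge 0$ and $y=x-s\xi$ and expanding yields $|y|^{2}=r^{2}(1-(\xi\cdot e)^{2})<c_{uv}^{2}$. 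Conversely, for $x=s\xi+y$ with $\xi\in\mathcal{U}(U)$, $s\ge 0$, $|y|<c_{uv}$ and $x=re$, the identity $|y|^{2}=r^{2}-2rs(\xi\cdot e)+s^{2}$ combined with completing the square gives $r^{2}(1-(\xi\cdot e)^{2})<c_{uv}^{2}$ when $\xi\cdot e\ge 0$, while $\xi\cdot e<0$ forces $r<c_{uv}$ directly; in both sub-cases $r<w(e)$.

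From (\ref{expressionW}) the openness of $\mathcal{W}$ is immediate as a Minkowski sum with an open ball. For the first line of (\ref{28}), fix a compact $\mathcal{C}\subset\mathcal{W}$ and pick $\delta>0$ with $\mathcal{C}+B(0,2\delta)\subset\mathcal{W}$. Each $x\in\mathcal{C}$ then admits a decomposition $x=s_{x}\xi_{x}+y_{x}$ with $\xi_{x}\in\mathcal{U}(U)\cup\{0\}$, $s_{x}\ge 0$, and $|y_{x}|<c_{uv}-\delta$; by compactness I extract a finite cover $\{B(x_{j},\delta/2)\}_{j=1}^{M}$ with associated data $(\xi_{j},s_{j})$. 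For each $j$, the defining property of $\xi_{j}\in\mathcal{U}(U)$ produces, for all large $t$, points $p_{j}(t)\in U$ with $|p_{j}(t)-s_{j}t\xi_{j}|=o(t)$. Anchoring the planar traveling front $(\Phi,\Psi)$ of (\ref{pro2}) at $p_{j}(t)$ and using Proposition~\ref{proposition1} to seed a sub-solution beneath it, the spreading at speed $c_{uv}$ yields $(u,v)(t,\cdot)\to(1,0)$ uniformly on $p_{j}(t)+B(0,(c_{uv}-\delta/2)t)$, which for large $t$ contains $t(\mathcal{C}\cap B(x_{j},\delta/2))$. Taking the union over $j$ proves the first line of (\ref{28}). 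The second line is the symmetric statement: for $\mathcal{C}\subset\mathbb{R}^{N}\setminus\overline{\mathcal{W}}$ compact, the lower semi-continuity of $w$ (inherited from the supremum in (\ref{unuve})) gives a uniform separation $|x|>w(x/|x|)+2\delta$, which I exploit by covering $\mathcal{C}$ by finitely many narrow sectors along directions $e_{j}$, applying the second line of (\ref{26}) of Theorem~\ref{thm2.4} for each $e_{j}$, and invoking the hypothesis (\ref{ABcon}) to handle $\mathcal{B}(U)$-directions through a planar super-solution for $u$; the initial condition $v_{0}=1$ on $\mathbb{R}^{N}\setminus U$ then propagates $v\to 1$ wherever $u\to 0$.

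The main obstacle is the inner convergence: Theorem~\ref{thm2.4} is one-dimensional along each ray, and the spatial scaling by $t$ prevents a direct H\"older-continuity argument from promoting it to a uniform statement on $\mathcal{C}$. The representation (\ref{expressionW}) is the essential ingredient: it reduces the uniform claim to finitely many planar-front sub-solutions anchored along the rays $\mathbb{R}^{+}\xi_{j}$ with $\xi_{j}\in\mathcal{U}(U)$, and the global hypothesis (\ref{ABcon}) guarantees both that enough such sub-solutions exist and that the complementary super-solutions exist in the outer region.
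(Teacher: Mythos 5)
Your proposal follows the paper's overall strategy closely: establish the geometric identity \eqref{expressionW}, then upgrade the directional spreading to uniformity on compact subsets of $\mathcal{W}$ and $\mathbb{R}^{N}\setminus\overline{\mathcal{W}}$ via finite coverings. Your double-inclusion proof of \eqref{expressionW} is correct and matches the argument from \cite{HR} that the paper cites. However, the proposal contains several genuine gaps in the covering arguments.

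For the first line, you take $p_j(t)\in U$ with $|p_j(t)-s_j t\xi_j|=o(t)$ and propose to anchor a subsolution at $p_j(t)$. Two problems. First, membership in $\mathcal{U}(U)$ only places $p_j(t)$ near $U$, not inside a ball of radius $\rho$ contained in $U$, which is what Proposition~\ref{proposition1} and Lemma~\ref{lemma:sub} require to seed a subsolution; the point could sit in an arbitrarily thin piece of $U$. The paper closes this by observing that \eqref{ABcon} forces $\mathcal{U}(U)=\mathcal{U}(U_{\rho})$, so one may anchor in $U_{\rho}$ and invoke Lemma~\ref{usco1} directly (via $B(t\tau\xi,c't)\subset U_{\rho}+B_{ct}$ for large $t$). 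You never invoke this identity. Second, a \emph{planar} traveling front anchored at $p_j(t)$ does not produce a ball of radius $(c_{uv}-\delta/2)t$ on which $(u,v)\to(1,0)$; what is needed is the spherically expanding subsolution $(\underline{u}_{\rho},\overline{v}_{\rho})$ of Lemma~\ref{lemma:sub}, built from the radial coordinate $h_{\varepsilon}(|x|)$. Calling this a planar front conflates two different constructions.

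For the second line, the covering argument needs open sets around each $y\in\mathcal{C}$ on which the convergence to $(0,1)$ is uniform. You appeal to the pointwise statement in Theorem~\ref{thm2.4} along directions $e_j$ plus ``narrow sectors'' plus a vague ``$v\to 1$ wherever $u\to 0$''. This last step is not automatic: $u$ small does not by itself force $v$ to approach $1$. The paper instead proves a dedicated cone estimate (Lemma~\ref{unle1}), giving uniform convergence of \emph{both} components $(u,v)\to(0,1)$ on the open cone $\bigcup_{\lambda>1}B(\lambda w e, c_{uv}(\lambda-1))$ for $w>w(e)$; this is obtained from the coupled supersolution $(\overline{u}_{R},\underline{v}_{R})$ of Lemma~\ref{lem1}, which controls $v$ from below simultaneously with $u$ from above. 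Without such a lemma, the finite covering does not close, and your argument as written has a genuine hole at this point.
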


We refer to \cite{HR} for some sufficient conditions and examples on \eqref{ABcon} and  further comments and counter-examples to Theorems~\ref{thm2.4}, \ref{ASSset} when \eqref{ABcon} is not satisfied.

We organize this paper as follows. In Section 2, we  consider two types of initial value problems and deduce two key lemmas. 
In Section \ref{sec5}, we consider scenario \textbf{(C1)}, that is, we prove Theorems  \ref{thm2} and \ref{thm3}. Section \ref{sec4} is 
 devoted to prove Theorem \ref{thm2.4}
 and Theorem~\ref{ASSset} for scenario \textbf{(C2)}.

\section{Two key lemmas}
In this section, we consider two types of initial value problems whose related solutions  are sub and supersolutions for scenarios \textbf{(C1)} \textbf{(C2)}. Define
$$\delta_0:= \min\left\{\frac{1}{2(3+4\max\{a, b\})},\frac{a-1}{2(1+2b)a}, 
\frac{b-1}{2(4a+1)b},\frac{1}{6},\frac{a-1}{2(4b+1)a}, 
 \frac{b-1}{2(1+2a)b}\right\}.$$
Fix any $0<\delta\le \delta_0$.

We first consider  the following initial condition
\begin{eqnarray*}
	\left\{\begin{array}{lll}
		\left( u_{0}(x), v_{0}(x)\right)
		=(1-\delta, \delta), && \hbox{for } x\in B(0, \rho),\\
		 \left( u_{0}(x), v_{0}(x)\right)
		=(0, 1), &&  \hbox{for } x\in \mathbb{R}^{N}\setminus B(0, \rho),
	\end{array}
	\right.
\end{eqnarray*}
with $\rho>0$. Denote the related solution by $(\underline{u}_{\rho}(t,x),\overline{v}_{\rho}(t,x))$. Then, the following property holds for $(\underline{u}_{\rho},\overline{v}_{\rho})$.

\begin{lemma}\label{lemma:sub}
There is $\rho>0$ such that for any $\varepsilon\in (0,c_{uv})$, there holds that
\begin{equation}\label{extending}
	(\underline{u}_{\rho}(t,x),\overline{v}_{\rho}(t,x))\rightarrow (1,0)~\text{
		uniformly in}~\left\lbrace x\in \mathbb{R}^{N}; |x|\le 
	(c_{uv}-\varepsilon)t\right\rbrace ~\text{as}~t\to +\infty.
\end{equation}
\end{lemma}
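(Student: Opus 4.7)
The strategy is to transform the strongly competitive system into a cooperative one via $\tilde v := 1-v$, and then to build, for $\rho$ sufficiently large, a radially symmetric expanding bistable traveling-front sub-solution that lies below the initial datum and spreads at speed $c_{uv}$.

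After the substitution, the pair $(u,\tilde v)$ satisfies a cooperative system on $[0,1]^2$ (the off-diagonal kinetic derivatives are $ra\,u\ge0$ and $b(1-\tilde v)\ge0$); the target $(1,0)$ for $(u,v)$ becomes $(1,1)$ for $(u,\tilde v)$, $(0,1)$ becomes $(0,0)$, and the traveling front $(\Phi,\Psi)$ becomes $(\Phi,1-\Psi)$, a pair of strictly decreasing profiles joining $(1,1)$ at $-\infty$ to $(0,0)$ at $+\infty$. Under \textbf{(A1)}, the kinetic Jacobians at the two pure states $(1,0)$ and $(0,1)$ both have negative eigenvalues, and the choice $\delta\le\delta_0$ places $(1-\delta,\delta)$ well inside the basin of attraction of $(1,0)$ for the kinetic ODE.

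For $\rho$ large, I would take
\[
(U_-,V_+)(t,x):=\bigl(\Phi(|x|-R(t))-q(t),\ \Psi(|x|-R(t))+q(t)\bigr),
\]
with $R(t)=R_0+c_{uv}t-C\log(1+t)$ and $q(t)=q_0 e^{-\lambda t}$, where $q_0\ge\delta$ and $R_0$ is chosen so that the comparisons $U_-(0,\cdot)\le\underline{u}_{\rho}(0,\cdot)$ and $V_+(0,\cdot)\ge\overline{v}_{\rho}(0,\cdot)$ hold on all of $\mathbb{R}^N$; direct inspection of the profiles shows this forces $\rho\ge R_0+\xi_\delta$ for some $\xi_\delta>0$ depending only on $\delta$ through $\Phi$ and $\Psi$, so one requires $\rho$ larger than a threshold depending on $\delta$. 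I then verify, using~\eqref{pro2}, that $(U_-,V_+)$ is a cooperative sub-solution (equivalently, $\underline{u}_{\rho}\ge U_-$ and $\overline{v}_{\rho}\le V_+$ by the parabolic comparison principle): the radial curvature term $d(N-1)/|x|$ in $\Delta U_-$ is absorbed by the logarithmic lag $-C\log(1+t)$ in $R(t)$, while the $q(t)$-perturbation is controlled, in the tails, by the strict negativity of the kinetic linearizations at $(1,0)$ and $(0,1)$, and, in the bounded transition region, by the strict monotonicity $\Phi'<0$, $\Psi'>0$ together with the remaining slack $c_{uv}-R'(t)=C/(1+t)>0$.

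Finally, for any $\varepsilon\in(0,c_{uv})$ and any $x$ with $|x|\le(c_{uv}-\varepsilon)t$, the asymptotics $R(t)/t\to c_{uv}$ and $q(t)\to 0$ give $|x|-R(t)\to-\infty$ uniformly, whence $\Phi(|x|-R(t))\to 1$ and $\Psi(|x|-R(t))\to 0$ uniformly, yielding~\eqref{extending}. The principal technical obstacle is the sub-solution verification in the transition zone $|x|-R(t)=O(1)$, where one must simultaneously juggle $q(t)$, $C/(1+t)$ and the uniform lower bound on $\min\{|\Phi'|,\Psi'\}$ on compact $\xi$-windows; this balance dictates the specific form of the threshold $\delta_0$ together with the choice of the parameters $C,\lambda,q_0$.
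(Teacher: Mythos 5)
Your overall scaffolding (comparison principle, radial expansion at speed close to $c_{uv}$, exponentially decaying perturbation) matches the paper's, and the logarithmic drift $R(t)=R_0+c_{uv}t-C\log(1+t)$ is a legitimate alternative to the paper's use of the Hamel function $h_{\varepsilon}$ for absorbing the curvature term. But there is a genuine gap in the perturbation ansatz: you perturb both components by the \emph{same} scalar $q(t)$, i.e.\ $U_-=\Phi-q$, $V_+=\Psi+q$. This fails in the tails because the kinetic Jacobians at $(1,0)$ and $(0,1)$ are not diagonal, and the off-diagonal coupling overwhelms a symmetric perturbation.

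Concretely, linearize in the tail $\xi\to-\infty$, where $(\Phi,\Psi)\to(1,0)$. Writing the perturbation as $(-p_1\delta e^{-\mu t},\,p_2\delta e^{-\mu t})$, the $u$-inequality $N_1\le0$ reduces (up to small spatial terms) to
\begin{equation*}
\mu p_1 \;\le\; r\,p_1 - r a\,p_2,
\end{equation*}
coming from the Jacobian row $(-r,\;-ra)$ at $(1,0)$; with $p_1=p_2=q$ this reads $\mu\le r(1-a)<0$, which is impossible since $a>1$. You can also see it nonlinearly: with $\tilde v=1-v$, the $u$-kinetics along the diagonal $(1-q,1-q)$ give $f(1-q,1-q)=r(1-q)q(1-a)<0$, so $f(\text{front})-f(\underline u,\underline{\tilde v})\approx r(1-q)q(a-1)>0$ adds to $-q'>0$, and $N_1>0$ — the differential inequality goes the wrong way. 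Symmetrically, the $\tilde v$-inequality in the other tail $\xi\to+\infty$ forces $p_2\gtrsim b\,p_1$, and the two constraints $p_1\gtrsim a\,p_2$ and $p_2\gtrsim b\,p_1$ are jointly unsatisfiable by any fixed pair of constants when $ab>1$. This is why the paper's proof introduces the spatially varying weights $p_1(\xi),p_2(\xi)$ interpolating from $(2a,1)$ behind the front to $(1,2b)$ ahead of it, together with the extra parameter $\omega$ in the phase to absorb the $p_i'$-terms in the transition zone. Your phrase "the strict negativity of the kinetic linearizations at $(1,0)$ and $(0,1)$" only captures the eigenvalues; it is the mismatch of the eigenvector directions at the two ends that breaks the single-$q$ ansatz and makes the $\xi$-dependent weights (or some equivalent device) essential.

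Your remaining points — the cooperative change of variables $\tilde v=1-v$, the role of $\rho$ in the initial ordering, and the asymptotics $|x|-R(t)\to-\infty$ on $\{|x|\le(c_{uv}-\varepsilon)t\}$ — are all fine, but they cannot rescue the verification unless the perturbation is made asymmetric and $\xi$-dependent as above.
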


\begin{proof}
We prove this lemma by constructing a subsolution.

{\it \textbf{Step 1}: choice of some parameters.} For $\varepsilon>0$, we  introduce a $C^{2}$ function $h_{\varepsilon}(r): [0,+\infty]\rightarrow \mathbb{R}$ as  in
\cite{HFB} satisfying the following properties:
\begin{equation}\label{hcon}
	\begin{cases}
		0\le h_{\varepsilon}'\le 1 ~\text{on}~[0, +\infty),\\
		h_{\varepsilon}'=0~\text{on a neighborhood of 0},\\
		h_{\varepsilon}(r)=r~\text{on}~[H_{\varepsilon}, +\infty)~\text{
			for some}~H_{\varepsilon}> 0,\\
		\max\{d,1\}\left(\frac{N-1}{r}h_{\varepsilon}'(r)+	h_{\varepsilon}''(r)\right)\le \frac{\varepsilon}{2}~~\text{on}~[0, +\infty).
	\end{cases}
\end{equation}
In particular,  it necessarily has 
\begin{equation}\label{hcon2}
	r\le h_{\varepsilon}(r)\le r+h_{\varepsilon}(0)~\text{for all }~r\ge 0.
\end{equation}
Since $\left( \Phi, ~\Psi\right)(-\infty)=(1, 0), ~ 
		\left( \Phi, ~\Psi\right)(+\infty)=(0, 1)$ and by \cite{Kan,KF} (see also \cite{MT}), there is $M>0$ such that
\begin{equation}\nonumber
		\begin{aligned}
			&1-\delta\le \Phi<1,\, 0<\Psi\le \delta~\text{for}~\xi \le -M, \quad 1-\delta\le \Psi<1,\, 0<\Phi\le \delta ~\text{for}~\xi \ge M.
		\end{aligned}
	\end{equation}
	and
	\[\Phi''<0, \, \Psi''>0 ~\text{for}~\xi \le -M,\quad \Phi''>0, \, \Psi''<0 ~\text{for}~\xi \ge M.
	\] 
	 There are positive
	constants $k_{1}$ and $k_{2}$ such that $-\Phi'\ge k_1,~\Psi'\ge k_2~\text{for}~|\xi|\le M.$
	Let $p_1(\xi)$ and $p_2(\xi)$ be decreasing and increasing $C^2$ functions respectively such that
	$$p_1(\xi)=2a \hbox{ for } \xi\le -M,\quad p_1(\xi)=1 \hbox{ for } \xi\ge M,$$ 
	and 
	$$p_2(\xi)=1 \hbox{ for } \xi\le -M,\quad p_1(\xi)=2b \hbox{ for } \xi\ge M.$$
	Even if it means increasing $M$, one can assume that 
	\begin{equation}\label{p}
	\max\{\|p_1'\|_{L^{\infty}},\|p_1''\|_{L^{\infty}},\|p_2'\|_{L^{\infty}},\|p_2''\|_{L^{\infty}}\}\le \min\left\{1,\frac{1}{(1+d)c_{uv}+d}, 
	\frac{1}{c_{uv}+2}\right\}.
	\end{equation}
	Take 
	\begin{equation}\label{mu}
	0<\delta\le \min\left\{\delta_0,\frac{k_1}{2},\frac{k_2}{2}\right\},\quad \mu=\min\left\{\frac{r}{4},\frac{1}{4},
	 \frac{1}{2}r(a-1),\frac{b}{2}\right\},
	\end{equation}
and $\omega>0$ large such that
	\begin{equation}\label{omega}
	\frac{1}{2}\omega \min\{k_1,k_2\} \ge \max\left\{
	 2 a +\frac{2a(b+1)r}{\mu}+1, 2b+\frac{2b(a+1)}{\mu}+1\right\}.
	\end{equation}
	Take $R\ge M+H_{\varepsilon} +\omega$. 
	Let $\rho\ge R+M$. 

{\it \textbf{Step 2}: construction of a subsolution.} For $t\ge 0$ and $x\in\mathbb{R}^N$, define 
\begin{equation}\label{eqsub}
	\begin{cases}
		\underline{u}(t, x)=\max\left\lbrace \Phi(\xi(t,x))-p_1(\xi(t,x))\delta e^{-\mu t},  0\right\rbrace,\\
		\overline {v}(t, x)=\min\left\lbrace\Psi(\xi(t,x))+p_2(\xi(t,x))\delta e^{-\mu t}, 1\right\rbrace,\\
	\end{cases}
\end{equation}
where 
\[\xi(t,x)=h_{\varepsilon}(|x|) -(c_{uv}-\frac{\varepsilon}{2})t -\omega \delta e^{-\mu t} +\omega \delta - R.
\]
We are going to show that $(\underline{u},\overline {v})$ is a subsolution.

We first check the initial values. For $t=0$ and $|x|< \rho$, one has that
$$\underline{u}(0,x)\le 1- \delta\le \underline{u}_{\rho}(0,x),\quad \overline{v}(0,x)\ge \delta\ge \overline{v}_{\rho}(0,x).$$
For $t=0$ and $|x|\ge \rho$, one has that $\xi(0,x)=h_{\varepsilon}(|x|)-R\ge \rho -R\ge M$ and
$$\underline{u}(0,x)\le \max\{\delta-  \delta,0\}=0\le \underline{u}_{\rho}(0,x),\quad \overline{v}(0,x)\ge 1-\delta+\delta \ge \overline{v}_{\rho}(0,x).$$

Then, we check that
$$N_1[\underline{u},\overline{v}]:=\underline{u}_{t}-d\Delta \underline{u}-r\underline{u}(1-\underline{u}-a\overline{v})\le 0,$$
for $t\ge 0$ and $x\in\mathbb{R}^N$ such that $\underline{u}(t,x)>0$ and
$$N_2[\underline{u},\overline{v}]:=\overline{v}_{t}-\Delta \overline{v} -\overline{v}(1-\overline{v}-b\underline{u})\ge 0,$$
for $t\ge 0$ and $x\in\mathbb{R}^N$ such that $\overline{v}(t,x)<1$.

After some computation and by \eqref{pro2},	 we obtain
	\begin{equation}\label{n1}
		\begin{aligned}
			N_{1}[\underline u, \overline v]
			=&(\frac{\varepsilon}{2} +\omega\delta\mu e^{-\mu t}) \Phi' + d(1-|h'_{\varepsilon}|^2) \Phi''
			-d\left(h^{''}_{\varepsilon}+\frac{N-1}{|x|}
		h^{'}_{\varepsilon} \right)\Phi' + p_1 \delta \mu e^{-\mu t}\\
		&+(c_{uv}-\frac{\varepsilon}{2} -\omega\delta\mu e^{-\mu t}) p_1'\delta e^{-\mu t} + d|h'_{\varepsilon}|^2 p_1''\delta e^{-\mu t}
			+d\left(h^{''}_{\varepsilon}+\frac{N-1}{|x|}
		h^{'}_{\varepsilon} \right)p_1'\delta e^{-\mu t} \\
		&+r\Phi(1-\Phi-a\Psi)-r\underline{u}(1-\underline{u}-a\overline{v}).
		\end{aligned}
	\end{equation}
For $\xi(t,x)\le -M$, one has that $p_1\equiv 2a$, $p_2\equiv 1$, $\Phi(\xi(t,x))\ge 1-\delta$, $\underline{u}(t,x)=\Phi(\xi(t,x))-2a\delta e^{-\mu t}\ge 1-(1+2a)\delta$, $\Psi(\xi(t,x))\le \delta$ and $\overline{v}(t,x)-\Psi(\xi(t,x))\le \delta e^{-\mu t}$. Then,
\begin{align*}
r\Phi(1-\Phi-a\Psi)-r\underline{u}(1-\underline{u}-a\overline{v})=& r\Phi(\underline{u}-\Phi+a\overline{v}-a\Psi) +r 2a \delta e^{-\mu t}(1-\underline{u}-a\overline{v})\\
\le& -r(1-\delta) a \delta e^{-\mu t} +r 2a \delta e^{-\mu t} (1+2a) \delta\\
\le& -\frac{1}{2}r a \delta e^{-\mu t},
\end{align*}
by $\delta\le \delta_0$. Combined with \eqref{hcon}, \eqref{mu}, $\Phi'<0$ and $\Phi''<0$ for $\xi\le -M$, it follows from \eqref{n1} that $N_{1}[\underline u, \overline v]\le 0$ for $\xi(t,x)\le -M$. 
	
For $\xi(t,x)\ge M$, one has that $p_1\equiv 1$, $p_2\equiv 2b$, $\Phi(\xi(t,x))\le \delta$, $\underline{u}(t,x)=\Phi(\xi(t,x))-\delta e^{-\mu t}\le \delta$, $\overline{v}(t,x)\ge \Psi(\xi(t,x))\ge 1-\delta$ and $\overline{v}(t,x)-\Psi(\xi(t,x))\le 2b\delta e^{-\mu t}$. Then,
\begin{align*}
r\Phi(1-\Phi-a\Psi)-r\underline{u}(1-\underline{u}-a\overline{v})=& r\Phi(\underline{u}-\Phi+a\overline{v}-a\Psi) +r   \delta e^{-\mu t}(1-\underline{u}-a\overline{v})\\
\le& 2ab r \Phi \delta e^{-\mu t} +r  \delta e^{-\mu t} (1-a+a\delta)\\
\le& -\frac{a-1}{2}r   \delta e^{-\mu t},
\end{align*}
by $\delta\le \delta_0$. Notice that $\xi(t,x)\ge M$ implies that $h_{\varepsilon}(|x|)\ge (c_{uv}-\frac{\varepsilon}{2})t   +  R+M-\omega \delta\ge H_{\varepsilon}$. Therefore, $h'_{\varepsilon}(|x|)=1$. Combined with \eqref{hcon}, \eqref{mu} and $\Phi'<0$, it follows from \eqref{n1} that $N_{1}[\underline u, \overline v]\le 0$ for $\xi(t,x)\ge M$. 

For $-M\le \xi(t,x)\le M$, one has that $-\Phi'(\xi(t,x))\ge k_1$. Notice that $\overline{v}(t,x)-\Psi(\xi(t,x))\le 2b\delta e^{-\mu t}$. Moreover,
\begin{align*}
r\Phi(1-\Phi-a\Psi)-r\underline{u}(1-\underline{u}-a\overline{v})=& r\Phi(\underline{u}-\Phi+a\overline{v}-a\Psi) +r p_1 \delta e^{-\mu t}(1-\underline{u}-a\overline{v})\\
\le& 2ab r \delta e^{-\mu t} +r 2a \delta e^{-\mu t}\\
\le&  2a(b+1) r\delta e^{-\mu t}.
\end{align*}
Notice that $\xi(t,x)\ge -M$ implies that $h_{\varepsilon}(|x|)\ge (c_{uv}-\frac{\varepsilon}{2})t   +  R-M-\omega \delta\ge H_{\varepsilon}$. Therefore, $h'_{\varepsilon}(|x|)=1$ and $h''_{\varepsilon}(|x|)=0$. Combined with \eqref{hcon}, \eqref{p}, \eqref{mu} and \eqref{omega}, it follows from \eqref{n1} that 
\begin{equation*}
		\begin{aligned}
			N_{1}[\underline u, \overline v]
			\le & -\omega k_1\delta\mu e^{-\mu t}  +2a \delta \mu e^{-\mu t} +2 a (b+1) r\delta e^{-\mu t}\\
			&+\|p'_1\|_{L^{\infty}} \delta \omega \delta\mu e^{-\mu t}+(c_{uv}\|p'_1\|_{L^{\infty}}+d\|p''_1\|_{L^{\infty}}+d \frac{\varepsilon}{2}\|p'_1\|_{L^{\infty}})\delta\mu e^{-\mu t}\\
			\le&  -\frac{1}{2}\omega k_1\delta\mu e^{-\mu t}+2a \delta \mu e^{-\mu t} +2 a (b+1) r\delta e^{-\mu t} +  \delta \mu e^{-\mu t}\le 0.
		\end{aligned}
	\end{equation*}
	
	In conclusion, $N_{1}[\underline u, \overline v]\le 0$ for $t\ge 0$ and $x\in\mathbb{R}^N$.
	
Similar computation can be applied to $N_2[\underline{u},\overline{v}]$. By \eqref{pro2}, one can compute that
	\begin{equation}\label{n2}
		\begin{aligned}
			N_{2}[\underline u, \overline v]
			=&(\frac{\varepsilon}{2}+\omega\delta \mu e^{-\mu t})\Psi ^{'}-
			\left( h^{''}_{\varepsilon}+\frac{N-1}{|x|}h_{\varepsilon}'\right)\Psi^{'}
			+\left( 1-|h_{\varepsilon}^{'}|^{2}\right)\Psi^{''}-p_{2}\delta \mu e^{-\mu t}\\
			&-\left( c_{uv}-\frac{\varepsilon}{2}-\omega\delta\mu e^{-\mu t}\right)
			p_{2}^{'}\delta e^{-\mu t}-|h_{\varepsilon}^{'}|^{2}p_{2}^{''}
			\delta e^{-\mu t}- \left( h_{\varepsilon}^{''}+
			\frac{N-1}{|x|}h^{'}_{\varepsilon}\right)p_{1}^{'}\delta e^{-\mu t}\\
			&+\Psi (1-\Psi -b\Phi)-\overline v(1-\overline v-b\underline u).  
		\end{aligned}
	\end{equation}
For $\xi(t, x)\le -M$, one has that $p_{1}\equiv2a,
p_{2}\equiv1$, $\Psi(\xi(t, x))\le \delta$, $\overline v(t, x)-\Psi(\xi(t, x)) =
-\delta e^{-\mu t}$, $\Phi(\xi(t, x))\ge 1-\delta $, $\underline{u}(t, x)=
\Phi(\xi(t, x))-p_{1}\delta e^{-\mu t}\ge1-\delta - 2a\delta$ and $\underline{u}(t, x)-\Phi(\xi(t, x))= -2a\delta^{-\mu t}$. 
Then,  
\begin{align*}
	\Psi(1-\Psi-b\Phi)-\overline{v}(1-\overline{v}-b\underline{u})
	=& \Psi(\overline{v}-\Psi+b\underline{u}-b\Phi) -\delta e^{-\mu t}(1-\overline{v}-b\underline{u})\\
	\ge& -\delta 2ab\delta e^{-\mu t}-\delta e^{-\mu t}(1-b(1-\delta
	 -2a\delta))\\
	\ge& \frac{b-1}{2}\delta e^{-\mu t},
\end{align*}
by $\delta \le \delta_{0}$. Combined with (\ref{hcon}), 
(\ref{mu}), $\Psi'>0$ and $\Psi''>0$ for
$\xi\le -M$, it follows from (\ref{n2}) that $N_{2}[\underline u, \overline v]\ge0$ for $\xi(t, x)\le -M.$

For $\xi(t, x)\ge M$, one has that $ p_{1}\equiv1$, $p_{2}\equiv2b,
\Psi(\xi(t, x))\ge 1-\delta$, $\underline{u}(t, x)-\Phi(\xi(t, x))=-\delta e^{-\mu t}$, 
$\overline v(t, x)=\Psi(\xi(t, x)) +p_{2}\delta e^{-\mu t}\ge 1-\delta $.Then,
\begin{align*}
	\Psi(1-\Psi-b\Phi)-\overline{v}(1-\overline{v}-b\underline{u})
	=& \Psi(\overline{v}-\Psi+b\underline{u}-b\Phi) -2b\delta e^{-\mu t}(1-\overline{v}-b\underline{u})\\
	\ge& (1-\delta)b\delta e^{-\mu t}-\delta 2b \delta e^{-\mu t}\\
	\ge& \frac{1}{2}b\delta e^{-\mu t},
\end{align*}
by $\delta \le \delta_{0}$, Combined with (\ref{hcon}), 
(\ref{mu}), $\Psi'>0$ and $\Psi''>0$ for
$\xi\le -M$, it follows from (\ref{n2}) that $N_{2}[\underline u, \overline v]\ge0$ for $\xi(t, x)\le -M.$

For $-M\le \xi(t,x)\le M$, one has that $\Psi'(\xi(t,x))\ge k_{2}$. It holds 
that $\underline u(t, x)-\Phi(\xi(t, x))\ge 2a\delta e^{-\mu t}$. Moreover, 
\begin{align*}
	\Psi(1-\Psi-b\Phi)-\overline{v}(1-\overline{v}-b\underline{u})
	=& \Psi(\overline{v}-\Psi+b\underline{u}-b\Phi) -p_{2}\delta e^{-\mu t}(1-\overline{v}-b\underline{u})\\
	\ge& -2ab\delta e^{-\mu t}-2b\delta e^{-\mu t}\\
	\ge& -2b(a+1)\delta e^{-\mu t},
\end{align*}
The inequality $\xi(t, x)\ge -M$ implies that $h_{\varepsilon}(|x|)
\ge H_{\varepsilon}$ and  therefore $h_{\varepsilon}'(|x|)=1, h_{\varepsilon}''
(|x|)=0$. Combined with \eqref{hcon}, \eqref{p}, \eqref{mu} and \eqref{omega}, it follows from \eqref{n1} that 
\begin{equation*}
	\begin{aligned}
		N_{2}[\underline u, \overline v]
		\ge & \omega k_2\delta\mu e^{-\mu t}  -2a \delta \mu e^{-\mu t} -2b
		 (a+1) \delta e^{-\mu t}\\
		&-\|p'_2\|_{L^{\infty}} \delta \omega \delta\mu e^{-\mu t}-(c_{uv}\|p'_2\|_{L^{\infty}}+\|p''_2\|_{L^{\infty}}+ \frac{\varepsilon}{2}\|p'_2\|_{L^{\infty}})\delta\mu e^{-\mu t}\\
		\ge&  \frac{1}{2}\omega k_2\delta\mu e^{-\mu t}-2b \delta \mu e^{-\mu t} -2 b (a+1) r\delta e^{-\mu t} -  \delta \mu e^{-\mu t}\ge 0.
	\end{aligned}
\end{equation*}

In conclusion, $N_{2}[\underline u, \overline v]\ge 0$ for $t\ge 0$ and $x\in\mathbb{R}^N$.

{\it \textbf{Step 3}: proof of \eqref{extending}.} By the comparison principle, one has that
$$\underline{u}_{\rho}(t,x)\ge \underline{u}(t,x) \hbox{ and } \overline{v}_{\rho}(t,x)\le \overline{v}(t,x) \hbox{ for $t\ge 0$ and $x\in\mathbb{R}^N$}.$$
Then, since $\xi(t,x)\le h_{\varepsilon}(0)-\frac{\varepsilon}{2} t -R\rightarrow -\infty$ as $t\rightarrow +\infty$ for $|x|\le (c_{uv}-\varepsilon) t$, \eqref{extending} immediately follows from \eqref{eqsub}.
\end{proof}

Proposition~\ref{proposition1} is a immediate consequence of Lemma~\ref{lemma:sub} and the comparison principle.

We then consider the following initial condition
\begin{equation*}
	\begin{cases}
		\left( u_{0}(x), v_{0}(x)\right)
		=(1, 0), &  \hbox{for } x\in \mathbb{R}^{N}\setminus B(0, R),\\
		\left( u_{0}(x), v_{0}(x)\right)
		=(\delta, 1-\delta), & \hbox{for } x\in B(0, R), 
	\end{cases}
\end{equation*}
with $R>0$. Denote the related solution by  $(\overline{u}_{R}(t,x),\underline{v}_{R}(t,x))$. Then, the following property holds for $(\overline{u}_{R},\underline{v}_{R})$.

\begin{lemma}\label{lem1}
	For any $\varepsilon>0$, there exists $R_{\varepsilon}>0$
	such that for any $R\ge R_{\varepsilon}$, 
	there holds 
	\begin{equation}\label{delta1}
		\begin{aligned}
			\overline{u}_{R}(t,x)\le 2\delta,~\underline{v}_{R}(t,x)(t, x)\ge 1-2\delta, ~&\text{for} ~0\le t\le 
			\frac{R-R_{\varepsilon}}{c_{uv}+\varepsilon}\\
			&\text{and} ~|x|\le R-R_{\varepsilon}-(c_{uv}+\varepsilon)t.
		\end{aligned}
	\end{equation}
	Moreover, there exists $T_{\varepsilon}>0$ such that 
	\begin{equation}\label{delta2}
		\begin{aligned}
			\overline{u}_{R}(t,x)\le \delta,~\underline{v}_{R}(t, x)\ge 1-\delta, ~&\text{for} ~T_{\varepsilon}\le t\le 
			\frac{R-R_{\varepsilon}}{c_{uv}+\varepsilon}\\
			&\text{and} ~|x|\le R-R_{\varepsilon}-(c_{uv}+\varepsilon)t.
		\end{aligned}
	\end{equation}
\end{lemma}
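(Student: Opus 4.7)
The approach parallels Lemma~\ref{lemma:sub}: construct a spherically symmetric supersolution $(\overline u, \underline v)$ whose interface retreats inward at speed $c_{uv}+\varepsilon/2$, then invoke comparison. Reusing $h_\varepsilon$, $p_1$, $p_2$ and the parameters $\delta, \mu, \omega, M$ from that proof, I set
\begin{equation*}
y(t,x) = R - R_\varepsilon + M - h_\varepsilon(|x|) - (c_{uv}+\varepsilon/2)t + \omega\delta(1-e^{-\mu t}),
\end{equation*}
\begin{equation*}
\overline u(t,x) = \min\{\Phi(y) + p_1(y)\delta e^{-\mu t},\, 1\}, \quad \underline v(t,x) = \max\{\Psi(y) - p_2(y)\delta e^{-\mu t},\, 0\},
\end{equation*}
and fix $R_\varepsilon \ge 2M + h_\varepsilon(0)$, assuming $R \ge R_\varepsilon + H_\varepsilon$ (so that $h_\varepsilon$ behaves like the identity on the relevant radii). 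The interface $\{y=0\}$ of this ansatz starts near the radius $R - R_\varepsilon + M$ at $t=0$ and retreats inward at speed $c_{uv}+\varepsilon/2$, which is slower than the target region's outer boundary speed $c_{uv}+\varepsilon$, creating the buffer we need.

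To verify the initial comparison $\overline u(0,\cdot) \ge u_0$, $\underline v(0,\cdot) \le v_0$, I would split into three zones. For $|x| \ge R$, the choice of $R_\varepsilon$ gives $y(0,x) \le -M$, so $\Phi(y) \ge 1-\delta$ and $\Psi(y) \le \delta$, making the min/max saturate to $\overline u = 1 = u_0$ and $\underline v = 0 = v_0$. For $R-R_\varepsilon \le |x| < R$, $y$ lies in the transition range and the perturbations $p_i\delta \ge \delta$ ensure $\overline u \ge \delta = u_0$ and $\underline v \le 1-\delta = v_0$. For $|x| < R-R_\varepsilon$, $y \ge M$, which gives $\overline u \le 2\delta$ and $\underline v \ge 1-2\delta$, both compatible with $u_0 = \delta$, $v_0 = 1-\delta$. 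The differential inequalities $N_1[\overline u, \underline v] \ge 0$ and $N_2[\overline u, \underline v] \le 0$ (with $N_1, N_2$ as in the previous proof) then follow by the same three-zone case analysis on $y \le -M$, $|y|\le M$, $y\ge M$; the driving term $\Phi'(y)(y_t+c_{uv})$ in $N_1$ is positive because $y_t + c_{uv} = -\varepsilon/2 + \omega\delta\mu e^{-\mu t}$ is negative (absorbing $\omega\delta\mu$ into $\varepsilon/4$ by the prior choice of $\delta$) while $\Phi' < 0$, the curvature terms have the same favorable signs as before, and the reaction-competition residues in each zone produce bounds that mirror those in the previous proof after obvious sign flips for the $\pm p_i\delta e^{-\mu t}$ perturbations.

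Comparison then yields $\overline u_R \le \overline u$ and $\underline v_R \ge \underline v$ pointwise. On the target region $|x| \le R - R_\varepsilon - (c_{uv}+\varepsilon)t$, the estimate $h_\varepsilon(r) \le r + h_\varepsilon(0)$ (combined with the absorption of $h_\varepsilon(0)$ into the shift already built into $R_\varepsilon$) gives
\begin{equation*}
y(t,x) \ge M + (\varepsilon/2) t + \omega\delta(1-e^{-\mu t}) \ge M,
\end{equation*}
so that $\Phi(y)\le \delta$, $p_1(y)=1$, and $\overline u \le \delta + \delta e^{-\mu t} \le 2\delta$; analogously $\underline v \ge 1-2\delta$, which proves \eqref{delta1}. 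For \eqref{delta2}, pick $T_\varepsilon$ large enough that simultaneously $\delta e^{-\mu T_\varepsilon} \le \delta/2$ and $\Phi(M+(\varepsilon/2)T_\varepsilon) \le \delta/2$ (both achievable since $\mu>0$ and $\Phi(+\infty)=0$); then for $t \ge T_\varepsilon$ on the same region, $\overline u \le \Phi(y) + \delta e^{-\mu t} \le \delta$ and $\underline v \ge 1-\delta$.

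The main obstacle is the shift bookkeeping near the initial discontinuity at $|x|=R$: the bistable profile $\Phi$ approaches $1$ only asymptotically, so the saturation $\min\{\cdot,1\}$ in the ansatz is essential and the supersolution's front must be shifted inward by at least the full transition width $M$ so that the $\{\overline u = 1\}$ region covers the exterior $\{|x|\ge R\}$ at $t=0$. This is precisely what forces the $R_\varepsilon$-buffer appearing in the statement of the lemma. The rest of the case-by-case verification of the differential inequalities is a mechanical adaptation of the corresponding computations in Lemma~\ref{lemma:sub}.
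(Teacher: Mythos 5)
Your overall strategy is the right one and mirrors the paper's: build a radially symmetric bistable-front supersolution whose interface retreats inward at speed $c_{uv}+\varepsilon/2$, verify the initial comparison and the differential inequalities in the three zones $\zeta\le -M$, $|\zeta|\le M$, $\zeta\ge M$, and then read off \eqref{delta1}--\eqref{delta2} on the target region. However, there is a genuine error in the choice of the time-dependent shift that breaks the supersolution verification in the middle zone.

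You wrote the phase as
\[
y(t,x)=R-R_\varepsilon+M-h_\varepsilon(|x|)-(c_{uv}+\varepsilon/2)t+\omega\delta\left(1-e^{-\mu t}\right),
\]
so that $y_t+c_{uv}=-\varepsilon/2+\omega\delta\mu e^{-\mu t}$, and you then require this to be negative, ``absorbing $\omega\delta\mu$ into $\varepsilon/4$ by the prior choice of $\delta$.'' But no such constraint is built into the choice of $\delta$: the parameters $\mu$ and $\omega$ (via \eqref{mu1} and \eqref{omega}) depend only on $a,b,r,d,k_1,k_2$, and $\delta\le\delta_0$ is fixed at the top of Section~2 \emph{before} $\varepsilon$ enters, while the lemma asserts \eqref{delta1}--\eqref{delta2} for that fixed $\delta$ and \emph{every} $\varepsilon>0$. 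For $\varepsilon<4\omega\delta\mu$, your coefficient $y_t+c_{uv}$ is positive at $t=0$, so $(y_t+c_{uv})\Phi'(y)<0$, and in the zone $|y|\le M$ the leading contribution becomes $-\left|\,y_t+c_{uv}\right|k_1$, the wrong sign; it cannot dominate the competition residues of size $O(\delta e^{-\mu t})$. The paper avoids this by flipping the sign of the drift: its phase uses $\omega\delta e^{-\mu t}-\omega\delta=-\omega\delta(1-e^{-\mu t})$, giving $\zeta_t+c_{uv}=-\varepsilon/2-\omega\delta\mu e^{-\mu t}<0$ unconditionally. Then $-(\zeta_t+c_{uv})\Phi'\ge\omega\delta\mu e^{-\mu t}k_1$ on $|\zeta|\le M$, and \eqref{omega} lets this absorb the bad terms. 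The direction of the drift must be opposite between the subsolution phase in Lemma~\ref{lemma:sub} and the supersolution phase here; you copied the sign from the subsolution.

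A secondary bookkeeping issue: on the target region $|x|\le R-R_\varepsilon-(c_{uv}+\varepsilon)t$, the $R_\varepsilon$'s in $y$ and in the constraint cancel, so the bound you get is $y\ge M-h_\varepsilon(0)+(\varepsilon/2)t+\omega\delta(1-e^{-\mu t})$, not $y\ge M$; at $t=0$ this is $M-h_\varepsilon(0)<M$. The paper's $\zeta$ does not subtract $R_\varepsilon$ from the phase and chooses $R_\varepsilon\ge h_\varepsilon(0)+\omega\delta+M+M_\varepsilon$, so that $R_\varepsilon$ genuinely absorbs $h_\varepsilon(0)$, $\omega\delta$, and the extra margin $M_\varepsilon$. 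This is fixable, but as written your estimate does not yield $y\ge M$.
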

\begin{proof}
	The proof is based on the construction of a
	suitable supersolution.
	
	{\it \textbf{Step 1}: choice of some parameters.} As in the proofs of Lemma
	 \ref{lemma:sub}, one can also choose $M>0$ such that
	  \begin{equation}\nonumber
	  	\begin{aligned}
	  		&1-\delta\le \Phi<1,\, 0<\Psi\le \delta~\text{for}~\xi \le -M, \quad 1-\delta\le \Psi<1,\, 0<\Phi\le \delta ~\text{for}~\xi \ge M.
	  	\end{aligned}
	  \end{equation}
	  and
	  \[\Phi''<0, \, \Psi''>0 ~\text{for}~\xi \le -M,\quad \Phi''>0, \, \Psi''<0 ~\text{for}~\xi \ge M.
	  \] 
	  Furthermore, there are positive
	  constants $k_{1}$ and $k_{2}$ such that $-\Phi'\ge k_1,~\Psi'\ge k_2~\text{for}~|\xi|\le M.$ Moreover, one can take $M_\varepsilon \ge  M$ such that
	    \begin{equation*}
	  	\begin{aligned}
	  		&1-\frac{\delta}{2}\le \Phi<1,\, 0<\Psi\le \frac{\delta}{2}~\text{for}~\xi \le -M_\varepsilon, \quad 1-\frac{\delta}{2}\le \Psi<1,\, 0<\Phi\le \frac{\delta}{2} ~\text{for}~\xi \ge M_\varepsilon.
	  	\end{aligned}
	  \end{equation*}
	  
	  Consider the  associated functions $p_{i}(\xi), i=1,~2,~ \xi\in \mathbb{R}^{N}$, $h_{\varepsilon}(r): [0,+\infty]\rightarrow \mathbb{R}$ provided by Step 1 of Lemma \ref{lemma:sub}.
	  Moreover take 
	\begin{equation}\label{mu1}
	  	0<\delta\le \min\left\{\delta_0,\frac{k_1}{2},\frac{k_2}{2}\right\},\quad \mu=\min\left\{\frac{r}{4},\frac{1}{4},
	  	\frac{1}{2}r(a-1), \frac{b-1}{2}\right\},
	  \end{equation}
  $\omega>0$ large and
	$R\ge R_\varepsilon$ with $R_{\varepsilon}=\max\left\lbrace H_{\varepsilon}, 
	 h_{\varepsilon}(0)+\omega\delta +M+M_\varepsilon\right\rbrace $ such that \eqref{omega} holds and 
	 \begin{equation}\label{Rcon}
	 	\frac{\varepsilon (R+R_{\varepsilon})}{2(c_{uv}+\varepsilon)}>\omega\delta +M+M_\varepsilon+H_{\varepsilon}.
	 \end{equation}
	
{\it \textbf{Step 2}: construction of a supersolution.}	For $0\le t\le  \frac{R-R_{\varepsilon}}{c_{uv}+\varepsilon}$ and $x\in\mathbb{R}^N$, define 
\begin{equation}\label{eqsup}
	\begin{cases}
		\overline{u}(t, x)=\min\left\lbrace \Phi(\zeta(t,x))+p_1(\zeta(t,x))\delta e^{-\mu t},  1\right\rbrace,\\
		\underline {v}(t, x)=\max\left\lbrace\Psi(\zeta(t,x))-p_2(\zeta(t,x))\delta e^{-\mu t}, 0\right\rbrace,\\
	\end{cases}
\end{equation}
where 
$$\zeta(t, x)=-h_{\varepsilon}(|x|)-(c_{uv}+\frac{\varepsilon}{2})t
+\omega\delta e^{-\mu t}-\omega\delta +R-M.$$

We will show that $(\overline{u},\underline {v})$ is a supersolution.

We first check the initial values. For $t=0$ and $|x|<R$, one has that 
$$\overline u(0, x)\ge \delta\ge \overline u_{R}(0, x), ~
\underline v(0, x)\le 1-\delta\le \underline v_{R}(0, x).$$
For $t=0$ and $|x|\ge R$, one has that $\zeta(0, x)=-h_{\varepsilon}(|x|)
+R-M\le -M$ and 
$$
\overline u(0, x)\ge \min\left\lbrace 1-\delta+\delta, 1\right\rbrace\ge 
 \overline u_{R}(0, x), ~\underline v(0, x)\le \max\left\lbrace
 \delta -\delta, 0 \right\rbrace\le  \underline v_{R}(0, x).
$$

Then we check that 
\[N_{1}[\overline u, \underline v]:=\overline u_{t}-d\Delta \overline u
-r\overline u(1-\overline u-a\underline v)\ge0,
\]
for $0\le t\le \frac{R-R_{\varepsilon}}{c_{uv}+\varepsilon}$ and $x\in \mathbb{R}^{N}$ such that $\overline u<1$ and 
\[N_{2}[\overline u, \underline v]:=\underline v_{t}-\Delta \underline v
-\underline v(1-\underline v-b\overline u)\le 0,
\]
for $0\le t\le \frac{R-R_{\varepsilon}}{c_{uv}+\varepsilon}$ and $x\in \mathbb{R}^{N}$ such that $\underline v>0$.

After some computation and by (\ref{pro2}), we obtain
\begin{equation}\label{n3}
	\begin{aligned}
		N_{1}[\overline u, \underline v]
		=&-\left( \frac{\varepsilon}{2} +\omega\delta\mu e^{-\mu t}\right)  \Phi' + d(1-|h'_{\varepsilon}|^2) \Phi''
		+d\left(h^{''}_{\varepsilon}+\frac{N-1}{|x|}
		h^{'}_{\varepsilon} \right)\Phi' - p_1 \delta \mu e^{-\mu t}\\
		&-(c_{uv}+\frac{\varepsilon}{2} +\omega\delta\mu e^{-\mu t}) p_1'\delta e^{-\mu t} -d|h'_{\varepsilon}|^2 p_1''\delta e^{-\mu t}
		+d\left(h^{''}_{\varepsilon}+\frac{N-1}{|x|}
		h^{'}_{\varepsilon} \right)p_1'\delta e^{-\mu t} \\
		&+r\Phi(1-\Phi-a\Psi)-r\overline{u}(1-\overline{u}-a\underline{v}).
	\end{aligned}
\end{equation}
For $\zeta(t, x)\le -M$, one still has that $p_{1}\equiv 2a$, $p_{2}\equiv 1$, 
$\Phi(\zeta(t, x))\ge 1-\delta$, $\Psi(\zeta(t, x))\le \delta$. Therefore, 
$\overline u(t, x)-\Phi(\zeta(t, x))=
2a\delta e^{-\mu t}$, $\underline v(t, x)-\Psi(\zeta(t, x))=-\delta e^{-\mu t}$, 
$\overline u(t, x)\ge \Phi(\zeta(t, x))\ge 1-\delta$. Then
\begin{align*}
	r\Phi(1-\Phi-a\Psi)-r\overline{u}(1-\overline{u}-a\underline{v})
	=& r\Phi(\overline{u}-\Phi+a\underline{v}-a\Psi) -2ar\delta e^{-\mu t}(1-\overline{u}-a\underline{v})\\
	\ge& r (1-\delta)a\delta e^{-\mu t}-\delta2ar\delta e^{-\mu t}\\
	\ge& \frac{1}{2}ar\delta e^{-\mu t},
\end{align*}
by $\delta\le \delta _0$. Notice that 
the inequalities $\zeta(t, x)\le -M$ and $0\le t\le \frac
{R-R_\varepsilon}{c_{uv}+\varepsilon}$ yield
$$
h_\varepsilon(|x|)\ge -(c_{uv}+\frac{\varepsilon}{2})t-\omega\delta +R\ge 
H_\varepsilon,
$$ 
by (\ref{Rcon}). Therefore $h'_\varepsilon(|x|)=1$ and 
$h''_\varepsilon(|x|)=0$. Combined with \eqref{hcon}, \eqref{mu1}, $\Phi'<0$ and $\Phi''<0$ for $\xi\le -M$, it follows from \eqref{n3} that $N_{1}[\overline u, \underline v]\ge 0$ for $\xi\le -M$. 

For $\zeta(t, x)\ge M$, one also has that $p_{1}\equiv 1$, $p_{2}\equiv
2b$, $\Phi(\zeta(t, x))\le \delta$, $\Psi(\zeta(t, x))\ge 1-\delta$. 
Therefore, $\overline u(t, x)-\Phi(\zeta(t, x))=\delta e^{-\mu t}$, $\underline  v(t, x)-\Psi(\zeta(t, x))=
-2b\delta e^{-\mu t}$ and $\underline v(t, x)\ge 1-\delta -2b\delta$. Then 
\begin{align*}
	r\Phi(1-\Phi-a\Psi)-r\overline{u}(1-\overline{u}-a\underline{v})
	=& r\Phi(\overline{u}-\Phi+a\underline{v}-a\Psi) -r\delta e^{-\mu t}(1-\overline{u}-a\underline{v})\\
	\ge& -r\delta 2ab\delta e^{-\mu t}- r\delta e^{-\mu t}(1-a(1-\delta -2b\delta))\\
	\ge& \frac{a-1}{2}r\delta e^{-\mu t},
\end{align*}
by $\delta \le \delta_0$. Combined with \eqref{hcon}, \eqref{mu1}, $\Phi'<0$ and $\Phi''>0$ for $\xi\ge M$, it follows from \eqref{n3} that $N_{1}[\overline u, \underline v]\ge 0$ for $\xi\ge M$.

For $-M\le \zeta(t, x)\le M$, one has that $-\Phi'(\zeta(t, x))\ge k_{1}$. Notice that $\underline v(t, x)-\Psi(\zeta(t, x))\ge -2b\delta e^{-\mu t}$. 
Then, 
\begin{align*}
	r\Phi(1-\Phi-a\Psi)-r\overline{u}(1-\overline{u}-a\underline{v})
	=& r\Phi(\overline{u}-\Phi+a\underline{v}-a\Psi) -p_{1}r\delta e^{-\mu t}(1-\overline{u}-a\underline{v})\\
	\ge& - 2abr\delta e^{-\mu t}- 2a\delta e^{-\mu t}\\
	\ge& -2a(b+1)r\delta e^{-\mu t}.
\end{align*}
From  $\zeta(t, x)\le M$ and $0\le t\le \frac
{R-R_\varepsilon}{c_{uv}+\varepsilon}$ , one also obtain
$h_\varepsilon(|x|)\ge-(c_{uv}+\frac{\varepsilon}{2})t-w\delta +R-M-M_\varepsilon\ge
H_\varepsilon $ by \eqref{Rcon}. Therefore, $h'_\varepsilon(|x|)=1$ and $h''_\varepsilon(|x|)=0$. Combined with \eqref{hcon}, \eqref{p},
\eqref{omega}  and \eqref{mu1}, it follows from \eqref{n3} that
\begin{align*}
	N_1[\overline u, \underline v]\ge &k_{1}\omega\delta \mu e^{-\mu t}
	-2a\delta \mu e^{-\mu t}-2a(b+1)r\delta \mu e^{-\mu t}\\
	&-\|p'_1\|_{L^{\infty}}\delta \omega\delta e^{-\mu t}-(c_{uv}\|p'_1\|_{L^{\infty}}+d\|p''_1\|_{L^{\infty}}+
	\frac{\varepsilon}{2}d\|p'_1\|_{L^{\infty}})\delta \mu e^{-\mu t}\\
	\ge &\frac{1}{2}\omega k_{1}\delta \mu e^{-\mu t}-2a\delta \mu e^{-\mu t}-2a(b+1)r\delta \mu e^{-\mu t}-\delta \mu e^{-\mu t}\ge 0.
\end{align*} 
As a conclusion, $N_1[\overline u, \underline v]\ge 0$ for $0\le t\le \frac{R-R_{\varepsilon}}{c_{uv}+\varepsilon}$ and $x\in \mathbb{R}^{N}$.

Next one can apply the similar proof used above to show that $N_2
[\overline u, \underline v]\le 0$. By (\ref{pro2}) we obtain
\begin{equation}\label{n4}
	\begin{aligned}
		N_2[\overline u, \underline v]=&-\left( \frac{\varepsilon}{2}+\omega\delta \mu e^{-\mu t}\right) \Psi'+
		(1-|h'_\varepsilon|^2)\Psi''+\left( h''_\varepsilon+
		\frac{N-1}{|x|}h'_\varepsilon\right)\Psi'+p_{2}\delta 
		\mu e^{-\mu t} \\
		&+\left(c_{uv}+ \frac{\varepsilon}{2}+\omega\delta \mu e^{\mu t}\right)p'_2\delta e^{-\mu t}+\delta e^{-\mu t}p''_{2} 
		\delta e^{-\mu t} +\left( h''_\varepsilon+
		\frac{N-1}{|x|}h'_\varepsilon\right)p'_2\delta e^{-\mu t}\\
		&+\Psi(1-\Psi-b\Phi)-\underline v(1-\underline v-b\overline u).
	\end{aligned}
\end{equation}
If $\zeta(t, x)\le -M$, one has that $p_1\equiv 2a$, $p_2\equiv 1$, $\Phi(\zeta(t, x))\ge 1-\delta$, $\Psi(\zeta(t, x))\le \delta$. $\overline u(t, x)-\Phi(\zeta(t, x))=
2a\delta e^{-\mu t}$, $\underline v(t, x)-\Psi(\zeta(t, x))=-\delta e^{-\mu t}$,  
$\overline u(t, x)\ge \Phi(\zeta(t, x))\ge 1-\delta$. Then
\begin{align*}
	\Psi(1-\Psi-b\Phi)-\underline{v}(1-\underline{v}-b\overline{u})
	=& \Psi(\underline{v}-\Psi+b\overline{u}-b\Phi) +\delta e^{-\mu t}(1-\underline{v}-b\overline{u})\\
	\le& \delta 2ab\delta e^{-\mu t}+\delta e^{-\mu t}(1-b+b\delta)\\
	\le& -\frac{b-1}{2}\delta e^{-\mu t},
\end{align*}
by $\delta\le \delta_0$. Recall that $\zeta(t, x)\le -M$ implies 
$h_\varepsilon(|x|)\ge H_\varepsilon$, hence $h'_\varepsilon(|x|)=1$ and 
$h''_\varepsilon(|x|)=0$. Combined with \eqref{hcon}, \eqref{mu1}, $\Phi'<0$ and $\Psi''>0$ for $\xi\le -M$, it follows from \eqref{n4} that $N_{2}[\overline u, \underline v]\le 0$ for $\xi\le -M$.
 
If $\zeta(t, x)\ge M$, one has that $p_{1}\equiv 1$, $p_{2}\equiv
2b$, $\Phi(\zeta(t, x))\le \delta$,$ \Psi(\zeta(t, x))\ge 1-\delta$. 
Therefore, $\overline u(t, x)-\Phi(\zeta(t, x))=\delta e^{-\mu t}$, $\underline  v(t, x)-\Psi(\zeta(t, x))=
-2b\delta e^{-\mu t}$ and $\underline v(t, x)\ge 1-\delta -2b\delta$. Then
\begin{align*}
	\Psi(1-\Psi-b\Phi)-\underline{v}(1-\underline{v}-b\overline{u})
	=& \Psi(\underline{v}-\Psi+b\overline{u}-b\Phi) +2b\delta e^{-\mu t}(1-\underline{v}-b\overline{u})\\
	\le& -(1-\delta) b\delta e^{-\mu t}+2b\delta e^{-\mu t}(1+2b)\delta\\
	\le& -\frac{1}{2}b\delta e^{-\mu t},
\end{align*} 
by $\delta\le \delta_0$. Combined with \eqref{hcon}, \eqref{mu1}, $\Psi'>0$ and $\Psi''<0$ for $\xi\ge M$, it follows from \eqref{n4} that $N_{2}[\overline u, \underline v]\le 0$ for $\xi\ge M$.

If $-M\le \zeta(t, x)\le M$, one has that $\Psi'(\zeta(t, x))\ge k_{2}$.
Notice that $\overline u(t, x)-\Phi(\zeta(t, x))\le -2a\delta e^{-\mu t}$. 
Then,
\begin{align*}
	\Psi(1-\Psi-b\Phi)-\underline{v}(1-\underline{v}-b\overline{u})
	=& \Psi(\underline{v}-\Psi+b\overline{u}-b\Phi) +p_2\delta e^{-\mu t}(1-\underline{v}-b\overline{u})\\
	\le&  2ab\delta e^{-\mu t}+2b\delta e^{-\mu t}\\
	\le& 2b(a+1)\delta e^{-\mu t},
\end{align*}
Recall that $\zeta(t, x)\le M$ implies 
$h_\varepsilon(|x|)\ge H_\varepsilon$ and therefore
 $h'_\varepsilon(|x|)=1$ and $h''_\varepsilon(|x|)=0$. 
 Combined with \eqref{hcon}, \eqref{p},  \eqref{omega}
 and \eqref{mu1}, it follows from \eqref{n4} that
  \begin{align*}
  	N_2[\overline u, \underline v]\le &-k_{2}\omega\delta \mu e^{-\mu t}
  	+2b\delta \mu e^{-\mu t}+2b(a+1)r\delta \mu e^{-\mu t}\\
  	&+\|p'_2\|_{L^{\infty}}\delta \omega\delta e^{-\mu t}+(c_{uv}\|p'_2\|_{L^{\infty}}+\|p''_2\|_{L^{\infty}}+
  	\frac{\varepsilon}{2}\|p'_2\|_{L^{\infty}})\delta \mu e^{-\mu t}\\
  	\le &-\frac{1}{2}\omega k_2\delta \mu e^{-\mu t}+2b\delta \mu e^{-\mu t}+2b(a+1)r\delta \mu e^{-\mu t}+\delta \mu e^{-\mu t}\le 0.
  \end{align*} 
\end{proof}
As a conclusion, $N_2[\overline u, \underline v]\le 0$ for $0\le t\le \frac{R-R_{\varepsilon}}{c_{uv}+\varepsilon}$ and $x\in \mathbb{R}^{N}$.
 
{\it \textbf{Step 3}: proof of \eqref{delta1} and \eqref{delta2}.} Therefore by the comparison principle, one has that
 \[
 \overline u_R(t, x)\le \overline u(t, x)~\text{and}~\underline v_R(t, x)
 \ge \underline v(t, x)~\text{for}~0\le t\le \frac{R-R_{\varepsilon}}{c_{uv}+\varepsilon}~\text{and}~x\in \mathbb{R}^{N}.
 \] 
Then since \eqref{hcon2}, $0\le t\le \frac{R-R_{\varepsilon}}{c_{uv}+\varepsilon}$
 and $|x|\le R-R_{\varepsilon}-(c_{uv}+\varepsilon)t$ yield 
 \[
 \zeta(t, x)\ge -h_\varepsilon(0)-|x|-\left(c_{uv}
 +\frac{\varepsilon}{2} \right)t-\omega\delta +R-M
 \ge M_\varepsilon\ge M.
 \]
Then it from (\ref{eqsup}) that
\[
\overline u_R(t, x)\le \min\{\delta+\delta, 1\}=2\delta, ~
\hbox{and}~\underline v_R(t, x)\ge \max\{1-\delta-\delta, 0\}=1-
2\delta.
\]
Thus (\ref{delta1}) is concluded.

Furthermore, there exists $T_\varepsilon>0$ such that $\delta e^{-\mu t}\le 
\delta/2\le\delta_0/2$ for $T_\varepsilon\le t\le \frac{R-R_{\varepsilon}}{c_{uv}+\varepsilon}$. Then (\ref{delta2})
immediately follows from (\ref{eqsup}).

\section{Proofs for scenario \textbf{(C1)}.}\label{sec5}
In this section, the initial values are always assumed to satisfy \textbf{(C1)} and let $(u, v)$ be the related solution.	
 We will prove Theorem  \ref{thm2} and Theorem \ref{thm3} in this section.

We first show   that both  the species 
$u$ and $v$ can not spread faster than 
$c_{u}$ and $c_{v}$ respectively. 

\begin{lemma}\label{lemma:(0,0)}
	There holds
	\[
	\lim_{t\to +\infty}\sup_{|x|\ge ct}u(t, x)= 0, \quad \hbox{ for any $c>c_{u}$},
	\]
	and
	\[
	\lim_{t\to +\infty}\sup_{|x|\ge ct} v(t, x)= 0, \quad \hbox{ for any $c>c_{v}$}.
	\]
\end{lemma}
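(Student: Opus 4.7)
The plan is to bound $u$ and $v$ pointwise from above by solutions of the decoupled scalar Fisher--KPP equations and then invoke the classical Aronson--Weinberger spreading estimate recalled in the introduction.

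First I would observe that since $v(t,x)\ge 0$ and $a>0$, one has $ru(1-u-av)\le ru(1-u)$, so $u$ is a subsolution of the scalar Fisher--KPP equation
\begin{equation*}
\bar u_t=d\Delta \bar u+r\bar u(1-\bar u),\qquad t>0,\; x\in\mathbb{R}^N,
\end{equation*}
in the viscosity/comparison sense. Let $\bar u$ denote the solution of this equation with initial datum $\bar u(0,\cdot)=\chi_U$, so that $\bar u(0,x)\ge u_0(x)$ on $\mathbb{R}^N$. The parabolic comparison principle then yields $0\le u(t,x)\le \bar u(t,x)$ for every $t\ge 0$ and $x\in\mathbb{R}^N$. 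Because scenario \textbf{(C1)} requires $U$ to be compact, $\chi_U$ is compactly supported, and the Aronson--Weinberger result quoted in the introduction (applied with parameters $(D,\rho)=(d,r)$, hence with minimal speed $c^*=2\sqrt{dr}=c_u$) gives
\begin{equation*}
\lim_{t\to +\infty}\sup_{|x|\ge ct}\bar u(t,x)=0\qquad\text{for every }c>c_u.
\end{equation*}
Combining this with $u\le \bar u$ proves the first statement.

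The second statement follows by the same argument with the roles of $u$ and $v$ exchanged: since $b u\ge 0$, one has $v_t-\Delta v\le v(1-v)$, so $v\le \bar v$, where $\bar v$ solves the scalar Fisher--KPP equation with $(D,\rho)=(1,1)$ and initial datum $\chi_V$ (compact by \textbf{(C1)}). Aronson--Weinberger then gives the vanishing of $\bar v$, and hence of $v$, outside $|x|\ge ct$ whenever $c>c_v=2$.

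There is no real obstacle here: the argument is a direct one-sided comparison exploiting the fact that the nonlinearities $ru(1-u-av)$ and $v(1-v-bu)$ are each bounded above by the corresponding uncoupled logistic nonlinearity, so that the initial-data structure in \textbf{(C1)} (both supports compact) lets us reduce the claim to the known scalar Fisher--KPP spreading upper bound. No information about the traveling front of the system or about assumptions \textbf{(A1)}--\textbf{(A3)} is needed for this lemma.
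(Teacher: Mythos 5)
Your proof is correct and rests on essentially the same idea as the paper's: exploit that $v\ge 0$ and $u\ge 0$ make the competition terms $-a v$ and $-b u$ one-signed, so each component is dominated by a Fisher--KPP supersolution. The only difference is in execution: the paper constructs an explicit exponential supersolution $\overline u(t,x)=\min\{Xe^{-\lambda_u(x\cdot e-c_u t)},1\}$ with $\lambda_u=\sqrt{r/d}$, pairs it with $\underline v\equiv 0$, verifies directly that $(\overline u,\underline v)$ is a supersolution pair for the system, and applies the system comparison principle; you instead compare $u$ against the scalar Fisher--KPP solution with compactly supported initial datum and cite the Aronson--Weinberger spreading estimate. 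These are two implementations of the same comparison: the paper's exponential barrier is essentially the classical proof of the scalar upper spreading bound unfolded inline, while you invoke it as a black box. Your version is a bit cleaner and keeps the argument at the scalar level; the paper's is marginally more self-contained. Either way, the compactness of $U$ and $V$ from \textbf{(C1)} is exactly what is needed to order the initial data, as you correctly noted.
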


\begin{proof}
Recall that $\phi(x\cdot e-c_ut)$  denotes the traveling front satisfying
	\begin{equation}
		\begin{cases}
			d\phi''+c_u\phi'+
			r \phi(1-\phi)=0,
			\\
			\phi(-\infty)=1,~ \phi(+\infty)=0,
		\end{cases}
	\end{equation}
	where $c_{u}=2\sqrt{dr}>0$ and $e\in \mathbb{S}^{N-1}$.
	
	Define  $\overline u(t, x)=\min\{Xe^{-\lambda_u(x\cdot e-
		c_{u}t)}, 1\}$ with $\lambda_u=\sqrt{r/d}$ for 
$e\in \mathbb{S}^{N-1}$ and $X>0$, $\underline v(t, x)= 0$.
 By some computation, one can get
	\[
	\begin{aligned}
		N_{1}[\overline u, \underline v]&=\overline u_{t}-
		d\Delta \overline u-r\overline u(1-\overline u-a\underline v)\\
		&=(-d\lambda_u^{2}+c_{u}\lambda_u-r)\overline u+r\overline u^{2}
	  =r\overline u^{2}	\ge 0,
	\end{aligned}
	\]
	and $N_{2}[\overline u, \underline v]=\underline v_{t}-
	\Delta \underline v-\underline v(1-\underline v-b\overline u)=0$.
	
	 Actually, one can choose $X$ large enough such that  $u(0, x)$ satisfies
	$u(0, x)\le Xe^{-\lambda_u(x\cdot e-
		c_{u}t)}$ for all $x\in \mathbb{R}^{N}$ and every $e\in \mathbb{S}^{N-1}$ since the support of $u_0$ is bounded by \textbf{(C1)}.
	Together with $v(0, x)\ge 0$, it follows from the comparison principle that $u(t, x)\le \overline u(t, x)$, $v(t, x)\ge  \underline v(t, x)$ for 
	$t\ge 0$ and $x\in \mathbb{R}^{N}$.
	Thus for any $c>c_{u}$ and $e\in \mathbb{S}^{N-1}$,
	there holds
	\[
	\lim_{t\to +\infty}\sup_{|x|\ge ct}u(t, x)\le 
	\lim_{t\to +\infty}\sup_{|x|\ge
		ct}\inf_{e\in\mathbb{S}^{N-1}}Xe^{-\lambda_u(x\cdot e-
		c_{u}t)}\le \lim_{t\to +\infty}
	Xe^{-\lambda_u(ct-
		c_{u}t)} \to 0,
	\]
	
Similarly, one can define $\underline u(t, x)=0 $, $\overline v (t, x)=\min\{ Ye^{-\lambda_v(x \cdot e-c_vt)}, 1\}$ with $\lambda_v=1$ and 
$Y>0$, where $c_v=2, e \in  S^{N-1}$.
The parameter $Y$ is also taken sufficiently large such that the comparison between the initial values works. Then we can apply once again the comparison principle
to conclude that
\[
\lim_{t\to +\infty}\sup_{|x|\ge ct}v(t, x)\to 0,
\]
for $c>c_v$.
\end{proof}

Then, for $c_{v}>c_{u}$, we have shown that $u$ converges to $0$ for $|x|\ge ct$ with $c>c_{u}$  as $t\rightarrow +\infty$ and then we show that $v$  occupies the intermediate zone $\{x\in\mathbb{R}^N:\, c_{1}t\le |x|\le c_{2}t\}$ for $c_{u}<c_{1}\le c_{2}<c_{v}$.  The strategy is to show the convergence of $v$ to 1 along $cte$ for $c \in (c_{u}, c_{v})$ and any $e\in\mathbb{S}^{N-1}$ and then show the uniform convergence for $c_{1}t\le |x|\le c_{2}t$. Before doing so, let us introduce the following equation for $v(t, x)$:
\begin{equation}\label{veq1}
	v_{t}=\Delta v+v(1-v-b\varepsilon),
\end{equation}
with $\varepsilon>0$. Obviously the reaction term is Fisher-KPP type and the equation has 
two equilibrium  $0$, $1-b\varepsilon$. By classical results of Aronson and Weinberger \cite{Aro} and \cite[Lemma~4.2]{DGM}, we have the following lemma.

\begin{lemma}\label{vlemm2}
	For any small $\varepsilon>0$ ($<1/b$), denote $\beta_{\varepsilon}=
	1-b\varepsilon$. There is a minimal speed
	$c_{\varepsilon}=2\sqrt{1-b\varepsilon}>0$ such that for every $e\in\mathbb{S}^{N-1}$, (\ref{veq1}) admits a traveling front $\psi_{\varepsilon}(x\cdot e -ct)$ connecting $0$ to $\beta_{\varepsilon}$ if and only if	$c \ge  c_{\varepsilon}$.
	 For any nontrivial, compactly supported and
		continuous initial data $0 \le  v_{0}(x)\le \beta_{\varepsilon}$,
		the associated solution $v(t, x)$ of (\ref{veq1}) has a spreading  speed $c_{\varepsilon}$ in the following sense:
		\[
		\begin{cases}
		\lim_{t\to +\infty}\sup_{|x|\ge ct} v(t, x)=0, ~& \hbox{for }\forall 
		c>c_{\varepsilon}\\
		\lim_{t\to +\infty}\sup_{|x|\le ct} |\beta_{\varepsilon}
		-v(t, x)|=0, ~&\hbox{for }\forall c\in (0, c_{\varepsilon}).
		\end{cases}
		\]
	Furthermore, $c_{\varepsilon}$ and $\beta_{\varepsilon}$ are
	nonincreasing with respect to $\varepsilon$ and tend respectively to
	$ c_{v}$ and 1 as $\varepsilon \to  0$.
\end{lemma}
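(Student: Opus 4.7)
The plan is to reduce \eqref{veq1} to the standard Fisher-KPP equation \eqref{eq:FKPP} by the elementary rescaling $\tilde{v}(t,x) := v(t,x)/\beta_{\varepsilon}$. Since $\beta_{\varepsilon} = 1 - b\varepsilon > 0$, one checks by direct substitution that
\[
\tilde{v}_t = \Delta \tilde{v} + \beta_{\varepsilon}\,\tilde{v}(1 - \tilde{v}),
\]
which is exactly of the form \eqref{eq:FKPP} with $D = 1$ and $\rho = \beta_{\varepsilon}$. This observation is the only non-cosmetic step in the argument; everything else will follow by invoking classical theory.

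Next I would apply the Aronson--Weinberger theorem (\cite{Aro}) to the rescaled equation. It provides the minimal traveling-front speed $2\sqrt{D\rho} = 2\sqrt{\beta_{\varepsilon}} = c_{\varepsilon}$ and the existence of planar traveling fronts $\tilde{\psi}(x\cdot e - ct)$ connecting $1$ to $0$ if and only if $c\ge c_{\varepsilon}$. Undoing the rescaling, i.e.\ setting $\psi_{\varepsilon} := \beta_{\varepsilon}\tilde{\psi}$, produces a traveling front of \eqref{veq1} connecting $0$ to $\beta_{\varepsilon}$ with the same admissibility condition $c\ge c_{\varepsilon}$.

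For the spreading statement, a nontrivial continuous initial datum $0\le v_0(x)\le \beta_{\varepsilon}$ with compact support corresponds to a nontrivial continuous $0\le \tilde{v}_0 \le 1$ with the same compact support. The multidimensional spreading result of Aronson--Weinberger, restated conveniently in \cite[Lemma~4.2]{DGM}, then yields $\tilde{v}(t,x)\to 1$ uniformly in $|x|\le ct$ for $c< c_\varepsilon$ and $\tilde{v}(t,x)\to 0$ uniformly in $|x|\ge ct$ for $c> c_\varepsilon$; multiplying by $\beta_\varepsilon$ gives the two advertised limits for $v$.

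Finally, the monotonicity of $c_\varepsilon=2\sqrt{1-b\varepsilon}$ and $\beta_\varepsilon=1-b\varepsilon$ in $\varepsilon\in(0,1/b)$, together with $c_\varepsilon\to 2 = c_v$ and $\beta_\varepsilon\to 1$ as $\varepsilon\to 0^+$, is immediate from the explicit formulas. There is really no substantive obstacle here: the content of the lemma is a direct consequence of Aronson--Weinberger, and the rescaling $v \mapsto v/\beta_\varepsilon$ is the only observation one needs to make to match the hypotheses of the cited results.
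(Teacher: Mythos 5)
Your proposal is correct and matches the paper's implicit argument: the paper itself does not spell out a proof but simply cites Aronson--Weinberger and \cite[Lemma~4.2]{DGM}, and the rescaling $\tilde v = v/\beta_\varepsilon$ you exhibit is exactly the elementary observation that reduces \eqref{veq1} to the Fisher--KPP form those references cover. The substitution computation is right (since $1 - \beta_\varepsilon\tilde v - b\varepsilon = \beta_\varepsilon(1-\tilde v)$), the resulting minimal speed $2\sqrt{1\cdot\beta_\varepsilon} = c_\varepsilon$ matches, compact support and continuity are preserved, and the monotonicity and limits of $c_\varepsilon$, $\beta_\varepsilon$ are immediate from the formulas; there is nothing to add.
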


\begin{lemma}\label{lemm1}
	For any given $c \in (c_{u}, c_{v})$, the solution $(u, v)$ of (\ref{pro1}) satisfies
	\begin{equation}\label{vc1}
		\lim_{t\to +\infty} v(t, x+cte)=1,
	\end{equation}
	where the convergence holds locally uniformly with respect to $x$ and uniformly
	with respect to $e\in \mathbb{S}^{N-1}$.
\end{lemma}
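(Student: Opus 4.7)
The strategy is to combine the scalar Fisher-KPP spreading of Lemma~\ref{vlemm2} with the upper bound on $u$ from Lemma~\ref{lemma:(0,0)} to produce a suitable sub-solution for $v$. Fix $\delta\in(0,1)$ arbitrary. Since $c<c_v=2=\lim_{\eta\to 0^+}2\sqrt{1-b\eta}$, choose $\eta>0$ small enough that $c_\eta:=2\sqrt{1-b\eta}>c$ and $\beta_\eta:=1-b\eta>1-\delta$, and fix $c'\in(c_u,c)$. By Lemma~\ref{lemma:(0,0)}, there is $T_0>0$ such that $u(t,x)\le\eta$ for every $t\ge T_0$ and $|x|\ge c't$, so that on the ``safe zone'' $\Omega:=\{(t,x):t\ge T_0,\,|x|\ge c't\}$ the function $v$ satisfies
\begin{equation*}
v_t\ge\Delta v+v(1-v-b\eta),
\end{equation*}
i.e.\ $v$ is a supersolution of \eqref{veq1} with $\varepsilon=\eta$, whose scalar spreading speed $c_\eta$ exceeds $c$.

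Next I build a sub-solution from Lemma~\ref{vlemm2}. Since $v(T_0,\cdot)>0$ pointwise (by the strong maximum principle, using $v_t\ge\Delta v-(1+b)v$ and $v_0\not\equiv 0$), one may pick a nonnegative, nontrivial, continuous, compactly supported $\underline v_0\le\min\{v(T_0,\cdot),\beta_\eta\}$ with $\mathrm{supp}(\underline v_0)\subset\{|x|>c'T_0\}$. Let $\underline v$ solve \eqref{veq1} with $\varepsilon=\eta$ and $\underline v(T_0,\cdot)=\underline v_0$. Lemma~\ref{vlemm2} then yields $\underline v(t,x)\to\beta_\eta$ uniformly on $\{|x-x^*|\le c''(t-T_0)\}$ for every $c''<c_\eta$, where $x^*$ is the center of $\mathrm{supp}(\underline v_0)$. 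Moreover, a direct computation gives
\begin{equation*}
\underline v_t-\Delta\underline v-\underline v(1-\underline v-bu)=b(u-\eta)\underline v\le 0\quad\text{wherever }u\le\eta,
\end{equation*}
so $\underline v$ is a sub-solution of the $v$-equation throughout the safe zone $\Omega$.

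The delicate step is to justify $\underline v\le v$ on a region reaching the target point $(t,cte+x)$ for large $t$: $\underline v$ spreads isotropically from $x^*$ while the inner boundary of $\Omega$ only recedes at speed $c'<c_\eta$, so the back edge of $\underline v$ eventually exits $\Omega$ and the sub-solution property fails there. I would handle this by applying the parabolic comparison principle on a moving cylinder $Q\subset\Omega$ whose back boundary is controlled by the Gaussian tail of $\underline v$ together with a quantitative positive lower bound on $v$ in $\Omega$ (from the heat-kernel estimate induced by $v_t\ge\Delta v-(1+b)v$). A more flexible alternative is a \emph{fresh-seed} scheme: for each large target time $t$, deposit a new compactly supported seed at time $T_1(t)$ centered at $c_0 T_1(t)e$, with $c_0\in(c',c_\eta)$ and $T_1(t)/t=\alpha$ chosen so that (i) the expanding support ball $B(c_0T_1(t)e,c_\eta(s-T_1(t)))$ stays inside $\Omega$ for every $s\in[T_1(t),t]$, and (ii) this ball contains $cte+x$ at $s=t$; an elementary geometric computation shows that such $\alpha,c_0$ exist precisely because $c<c_\eta$.

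Combining the comparison with Lemma~\ref{vlemm2} then gives $\liminf_{t\to+\infty}v(t,cte+x)\ge\beta_\eta>1-\delta$, uniformly for $x$ in any compact set and for $e\in\mathbb{S}^{N-1}$ (the construction depends on $e$ only through a rotation of $x^*$). The upper bound $v\le 1$ follows from $v_0\le 1$ and the parabolic maximum principle, so letting $\delta\to 0$ yields \eqref{vc1}. The principal obstacle I expect is precisely the comparison step: taming the isotropic back-spread of $\underline v$ without losing its forward spreading speed, which is what forces either the moving-cylinder argument with Gaussian-tail control or the fresh-seed construction described above.
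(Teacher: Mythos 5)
Your identification of the obstacle is exactly right, but neither of the two workarounds you sketch actually closes the gap.

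For the moving-cylinder idea: the quantitative lower bound on $v$ coming from $v_t\ge\Delta v-(1+b)v$ decays like $e^{-(1+b)t}$ via the heat kernel, which is catastrophic at linear-in-time spatial scales. Meanwhile $\underline v$ (the scalar KPP solution started from a fixed compact seed at time $T_0$) eventually satisfies $\underline v\approx\beta_\eta$ on an entire ball of radius $\approx c_\eta(t-T_0)$ around $x^*$, so its ``back'' toward the origin is not in a Gaussian tail at all: the inner boundary $\{|x|=c's\}$ is engulfed by the region where $\underline v$ is of order one in finite time (since $|x^*|-c_\eta(s-T_0)<c's$ once $s$ is moderately large), and you cannot dominate $\underline v$ there by a lower bound on $v$ that decays exponentially in time. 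Truncating the cylinder short of that sphere, say at $|x|=c^{+}s$ with $c^{+}>c'>c_u$, reduces you to needing a uniform positive lower bound for $v$ along rays moving at speed $c^{+}\in(c_u,c_v)$, which is exactly the statement being proved. For the fresh-seed scheme: to deposit a seed at time $T_1(t)=\alpha t$ centered at $c_0T_1(t)e$ and then compare, you need $v(T_1(t),\cdot)$ bounded below by a fixed positive constant near $c_0T_1(t)e$, \emph{uniformly in} $t$. Since $T_1(t)\to\infty$ and $c_0\in(c_u,c)$, that is again the conclusion of the lemma at a slower speed, so the scheme is circular. The elementary geometry of $(\alpha,c_0)$ is not the bottleneck; the missing ingredient is a time-uniform positive lower bound for $v$ at the seeding location.

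The paper (following \cite[Lemma 4.3]{DGM}) sidesteps the back-spread problem entirely by using a \emph{traveling} compactly supported subsolution instead of an isotropically spreading one:
\begin{equation*}
\underline v(t,x)=\kappa\, e^{-\frac{c'}{2}(x\cdot e-c't)}\,\psi_{2R}\bigl(x-(c't+X_\varepsilon+2R)e\bigr),
\end{equation*}
where $\psi_{2R}$ is the Dirichlet principal eigenfunction of $\Delta$ on $B_{2R}$. Its support is the ball $B\bigl((c't+X_\varepsilon+2R)e,\,2R\bigr)$, which remains inside the safe zone $\{|x|\ge X_\varepsilon+c_ut\}$ for all $t\ge0$ precisely because $c'>c_u$; it therefore never has to be compared with $v$ where $u$ could be large. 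Since $(c')^2/4<1$ (as $c'<c_v=2$) and the principal Dirichlet eigenvalue on $B_{2R}$ tends to $0$ as $R\to\infty$, this is a genuine subsolution of the $v$-equation wherever $u\le\varepsilon$, for $R$ large and $\kappa,\varepsilon$ small. A single initial comparison at a fixed time (where $v>0$ on the compact support, using the strong maximum principle exactly as you noted) then propagates for all $t$, yielding \eqref{cla1} at time $ct/c'$ and distance $\approx ct$. A stationary bump subsolution then spreads this positivity over the whole interval $[ct/c',t]$, and a limiting argument with the hair-trigger effect gives convergence to $1-b\varepsilon$. In short: the traveling bump tracks the moving target without any backward spread, which is exactly the mechanism your proposal is missing.
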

\begin{proof}
Since the proof is similar as that of	\cite[Lemma 4.3]{DGM},  we only sketch	it.

\begin{itemize}
\item  By the proof of Lemma~\ref{lemma:(0,0)}, one has that for any small $\varepsilon>0$, there exists some $X_{\varepsilon}>0$ such that 
	\begin{equation*}
		u(t, x)\le \overline{u}(t, x)\le \varepsilon ~\text{for any}~(t, x)~
		\text{satisfies}~
		|x|\ge X_{\varepsilon}+c_{u}t,
	\end{equation*}
	where $\overline{u}(t, x)=X_0 e
	^{-\lambda_u(x\cdot e -c_u t)}$ by setting $X_{0}>0$ such that 
	$u(t, x)\le \overline{u}(t, x)$ for 
	$t>0$ and $x\in \mathbb{R}^N$.
	
	\item Take $c'$ such that $c_{u}<c<c'<c_{v}$. Let $\psi_R$ be the principal eigenfunction normalized by $||\psi_{R}||_{\infty}=1$ and $\lambda_R$ be the principal eigenvalue of the following problem
	\begin{equation*}
		\begin{cases}
			\Delta \psi_{R}=\lambda_{R}\psi_{R}&~\text{in}~B_{R},\\
			\psi_{R}>0&~\text{in}~B_{R},\\
			\psi_{R}=0 &~\text{on}~\partial B_{R}.
		\end{cases}
	\end{equation*}
	 For every $e\in\mathbb{S}^{N-1}$, there is a subsolution of \eqref{veq1} as
	\begin{equation*}
		\underline{v}(t, x)=\kappa e^{-\frac{c'}{2}(x\cdot e-c' t)}\psi_{2R}
		(x-(c' t+X_{\varepsilon}+2R)e).
	\end{equation*}
by taking $R>0$ sufficiently large, satisfying $\underline{v}_t -\Delta \underline{v} -\underline{v}(1-\underline{v}-bu)\le 0$ for $t\ge 0$ and $|x|\ge X_{\varepsilon} +c_u t $. The comparison principle leads to the fact that there are
	$R_{1}>0$ and $\kappa_{1}>0$ such that 
	\begin{equation}\label{cla1}
		\liminf_{t\to+\infty}\inf_{e\in \mathbb{S}^{N-1}}\inf_{x\in B_{R_{1}}}
		v\left( \frac{ct}{c'}, x+(ct+X_{\varepsilon}+2R_{1})e\right)
		>\kappa_{1}. 
	\end{equation}

\item By noticing that $\eta' \psi_{R/2}(x)$ is a stationary subsoulution of \eqref{veq1} for sufficiently small $\eta'>0$, it follows from \eqref{cla1} that
\begin{equation}\label{vcon1}
		\liminf_{t\to+\infty}\inf_{e\in \mathbb{S}^{N-1}}\inf_{(\tau, x)\in Q_{t}}
		v\left( \tau, x+(ct+X_{\varepsilon}+2R_{2})e\right)>\kappa_{2},
	\end{equation}
	for some $R_{2}>0$ and $\kappa_{2}>0$,  where $Q_{t}$ is defined by
	\[
	Q_{t}:=\left\lbrace (\tau, x) ~| ~\frac{ct}{c^{'}}\le \tau\le t~\text{and}~x\in
	\mathbb{R}^{N}\right\rbrace. 
	\]
	
\item Extract a subsequence $t_{n}\to +\infty$ and
	$ e_{n}\to e_{\infty}$ in $\mathbb{S}^{N-1}$ such that
	\[
	v_{n}(t, x)=v(t+t_{n}, x+ct_{n}e)
	\]
	converges locally uniformly to $v_{\infty}(t, x)$ satisfying
	\begin{equation}\label{eq:vinfty}
	\partial_{t}v_{\infty}-\Delta v_{\infty}-v_{\infty}(1-v_{\infty}-b\varepsilon)\ge 0 \quad	~\text{for}~(t, x)\in \mathbb{R}\times \mathbb{R}^{N}.
	\end{equation}
	By \eqref{vcon1}, we know that
	\[
	\inf_{x\in B_{R_{2}}}v_{\infty}(t, x+(X_{\varepsilon}+2R)e_{\infty})\ge \kappa_{2}>0
	\] 
	for all $t\le 0$. Then, by \eqref{eq:vinfty} and the hair-trigger effect for the Fisher-KPP equation (see \cite{Aro}), one has $v_{\infty}(t,x)\ge 1-b\varepsilon$ as $t\rightarrow +\infty$ locally uniformly for $x\in\mathbb{R}^N$. The arbitrariness of $\varepsilon$ leads the conclusion \eqref{vc1}.
\end{itemize}
\end{proof}
 
Then, we introduce another tool in the paper of Aronson and Weinberger \cite{Aro}.

\begin{lemma}\label{lemm3}
	For any $0<\varepsilon<1/b$ and  any $c\in (0, c_{\varepsilon})$
	($c_{\varepsilon}=2\sqrt{1-b\varepsilon}$), there exist 
	$\beta(c)<1-b\varepsilon$ such that for any $\beta\ge \beta(c)$
	the solution of 
	\begin{equation*}
		\begin{cases}
			\hat{V}''+c\hat{V}'+\hat{V}(1-\hat{V}-b\varepsilon)=0,\\
			\hat{V}(0)=\beta, ~~\hat{V}'(0)=0
		\end{cases}
	\end{equation*}
	satisfies $\hat{V}(a)=0$ for some $a>0$ and $\hat{V}'(\xi)<0$ for
	$\xi \in (0, a]$.
\end{lemma}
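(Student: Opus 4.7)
The plan is to analyze the second-order ODE as an autonomous planar system in the phase plane $(\hat{V},\hat{V}')$ and to exploit that, since $c<c_{\varepsilon}=2\sqrt{1-b\varepsilon}$, the equilibrium $(0,0)$ is a stable spiral while $(1-b\varepsilon,0)$ is a saddle. Writing $\beta_{\varepsilon}:=1-b\varepsilon$ and $W=\hat{V}'$, the system reads $\hat{V}'=W$, $W'=-cW-\hat{V}(\beta_{\varepsilon}-\hat{V})$. The starting point $(\beta,0)$ lies strictly between the two equilibria, on the $\hat{V}$--axis.

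First I would establish monotonicity. Since $\hat{V}(0)=\beta\in(0,\beta_{\varepsilon})$ forces $\hat{V}''(0)=-\beta(\beta_{\varepsilon}-\beta)<0$, the trajectory enters $\{W<0\}$ immediately. If $\hat{V}'$ were to return to $0$ at some first $\xi_{1}>0$ while $\hat{V}(\xi_{1})\in(0,\beta_{\varepsilon})$, then $\hat{V}''(\xi_{1})\geq 0$ would be required for $\hat{V}'$ to come back from negative values to $0$; yet the ODE evaluated there gives $\hat{V}''(\xi_{1})=-\hat{V}(\xi_{1})(\beta_{\varepsilon}-\hat{V}(\xi_{1}))<0$, a contradiction. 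Hence $\hat{V}'<0$ as long as $\hat{V}>0$, and $\hat{V}$ strictly decreases on its positivity set.

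Next I would show that $\hat{V}$ necessarily reaches $0$ in finite time. Suppose for contradiction that $\hat{V}(\xi)>0$ for all $\xi\geq 0$. Monotonicity together with standard ODE arguments forces the limit $\lim_{\xi\to+\infty}\hat{V}(\xi)$ to be a zero of $V\mapsto V(\beta_{\varepsilon}-V)$, hence to equal $0$. Setting $\phi(\xi):=e^{c\xi/2}\hat{V}(\xi)$, a direct computation yields the Sturm--Liouville form
\[
\phi''+\Bigl(\beta_{\varepsilon}-\tfrac{c^{2}}{4}-\hat{V}(\xi)\Bigr)\phi=0.
\]
Since the hypothesis $c<2\sqrt{\beta_{\varepsilon}}$ gives $\beta_{\varepsilon}-c^{2}/4>0$, and $\hat{V}(\xi)\to 0$, the coefficient above exceeds the positive constant $\tfrac{1}{2}(\beta_{\varepsilon}-c^{2}/4)$ for all $\xi$ sufficiently large. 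Sturm comparison with the oscillator of that constant coefficient then forces $\phi$ to change sign on any sufficiently long interval, contradicting $\phi>0$. Therefore $\hat{V}(a)=0$ at some finite $a>0$; the strict inequality $\hat{V}'(a)<0$ then follows from ODE uniqueness at the equilibrium $(0,0)$, since otherwise the trajectory would coincide with the constant solution $\equiv 0$, contradicting $\hat{V}(0)=\beta>0$. Since the argument works for every $\beta\in(0,\beta_{\varepsilon})$, one may in fact take $\beta(c)$ to be any fixed number in $(0,\beta_{\varepsilon})$.

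The main obstacle, and the step in which the standing hypothesis $c<c_{\varepsilon}$ truly enters, is ruling out a slow asymptotic descent $\hat{V}(\xi)\to 0^{+}$ without $\hat{V}$ ever vanishing; the Sturm--Liouville reformulation converts the spiral nature of the origin into a clean, elementary sign-change argument that forbids this scenario.
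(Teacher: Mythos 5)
The paper offers no proof of this lemma; it merely states that the result is ``another tool in the paper of Aronson and Weinberger \cite{Aro}.'' Your proof is a correct, self-contained phase-plane argument, and it is a genuinely different route from simply invoking the cited general result.

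Your reasoning is sound at each step. The monotonicity argument is airtight: starting at $(\beta,0)$ with $\beta\in(0,\beta_{\varepsilon})$, one has $\hat V''(0)<0$, and at any first return of $\hat V'$ to $0$ while $\hat V\in(0,\beta_{\varepsilon})$ the ODE would force $\hat V''<0$, contradicting that $\hat V'$ must increase to reach $0$ from below. The nonvanishing-for-all-time scenario is cleanly eliminated: a bounded monotone solution of the autonomous planar system limits to an equilibrium, necessarily $(0,0)$, and the substitution $\phi=e^{c\xi/2}\hat V$ gives $\phi''+(\beta_{\varepsilon}-c^{2}/4-\hat V)\phi=0$, whose potential is eventually bounded below by a positive constant precisely because $c<c_{\varepsilon}=2\sqrt{\beta_{\varepsilon}}$; Sturm comparison then forbids $\phi>0$ on a half-line. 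Finally, $\hat V'(a)<0$ follows from backward uniqueness at $(0,0)$. One remark worth making explicit: your argument shows the conclusion for \emph{every} $\beta\in(0,\beta_{\varepsilon})$, so $\beta(c)$ may be chosen arbitrarily small; Aronson--Weinberger need a genuine threshold $\beta(c)$ close to $\beta_{\varepsilon}$ only because they treat general monostable nonlinearities for which the minimal speed can strictly exceed the linearized speed $2\sqrt{f'(0)}$, making the origin a node rather than a focus for some admissible $c$. Here the nonlinearity $V(\beta_{\varepsilon}-V)$ is of KPP type, $c_{\varepsilon}$ is exactly the linearized speed, and the origin is always a focus for $c<c_{\varepsilon}$, which is exactly what your Sturm argument captures. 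So your proof is slightly stronger than the stated lemma, at the price of being tailored to the KPP case (which is all that is needed here).
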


\begin{lemma}\label{lemm2}
	Assume $c_{u}<c_{v}$. For any given $c_{1}$, $c_{2}$ satisfying $c_{u}<c_{1}<c_{2}<c_{v}$,
	 the 
	solution $(u,v)$ has 
	the following spreading property
	\begin{equation*}
		\lim_{t\to +\infty}\sup_{c_{1}t\le |x|\le c_{2}t}\left\lbrace |u(t, x)|
		+|v(t, x)-1|\right\rbrace =0.
	\end{equation*}
\end{lemma}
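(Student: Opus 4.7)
The $u$-component of the claim is immediate from Lemma~\ref{lemma:(0,0)}, since $c_1>c_u$. Because $v\le 1$ always holds by comparison, only the matching lower bound is nontrivial: for an arbitrary $\eta\in(0,1)$, we must show $v(t,x)\ge 1-\eta$ uniformly on the annulus $\{c_1 t\le|x|\le c_2 t\}$ for all large $t$.

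The approach is to realize $v$ as a supersolution of a scalar Fisher--KPP-type equation outside a slowly moving ball, then invoke Lemma~\ref{lemm1} to produce a uniform seed, and finally let Aronson--Weinberger spreading cover the annulus. Since $c_2<c_v=2$ and $c_\varepsilon=2\sqrt{1-b\varepsilon}\to c_v$ as $\varepsilon\downarrow 0$ (Lemma~\ref{vlemm2}), fix $\varepsilon\in(0,1/b)$ with $b\varepsilon<\eta/4$ and $c_\varepsilon>c_2$, together with some $c'\in(c_u,c_1)$. Lemma~\ref{lemma:(0,0)} supplies $T_0$ such that $u(t,x)\le\varepsilon$ whenever $t\ge T_0$ and $|x|\ge c't$; on that region $v_t-\Delta v=v(1-v-bu)\ge v(1-v-b\varepsilon)$, i.e.\ $v$ is a supersolution of \eqref{veq1}, whose spreading speed is precisely $c_\varepsilon$ by Lemma~\ref{vlemm2}.

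Lemma~\ref{lemm1} applied with speed $c'$ then yields, for any fixed $R>0$, a time $T_1\ge T_0$ after which $v(T_1,y)\ge 1-\eta/4$ whenever $|y-c'T_1 e|\le R$ for some $e\in\mathbb{S}^{N-1}$. For each such $e$ we use Lemma~\ref{lemm3}, together with a suitable truncation (or equivalently the classical ball-and-principal-eigenfunction construction), to build a compactly supported subsolution $\underline V_e$ of \eqref{veq1} that starts inside $B(c'T_1 e,R)$ and invades outward along $\mathbb{R}^+ e$ at some speed $\tilde c\in(c_2,c_\varepsilon)$, with bulk value close to $1-b\varepsilon$. The gap $\tilde c>c'$ keeps the support of $\underline V_e$ strictly inside $\{|x|\ge c't\}$, so parabolic comparison there yields $v\ge\underline V_e$. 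For a point $y$ in the annulus with $t$ large, take $e=y/|y|$; since $\tilde c>c_2$, a direct computation gives $|y|-c'T_1\le\tilde c(t-T_1)$, placing $y$ in the bulk of $\underline V_e$, so $v(t,y)\ge 1-b\varepsilon-\eta/4\ge 1-\eta$. The uniformity in $e$ built into Lemma~\ref{lemm1} propagates to uniformity over the whole annulus.

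\textbf{Main obstacle.} The genuinely delicate step is producing $\underline V_e$ as an honest subsolution of \eqref{veq1} compatible with both the moving inner boundary $|x|=c't$ (beyond which the scalar reduction fails) and the extra $(N-1)r^{-1}\partial_r$ curvature term in higher dimensions. The Aronson--Weinberger profile from Lemma~\ref{lemm3} is intrinsically one-dimensional and develops a corner where it is truncated, so care is needed to verify subsolution inequalities across that corner and at the moving boundary. The quantitative key is the speed ordering $c_u<c'<c_1<c_2<\tilde c<c_\varepsilon$, which simultaneously keeps the support of $\underline V_e$ inside the validity region and lets it overtake every annular point in finite time; once this ordering is secured, the remaining arguments are routine applications of the parabolic comparison principle.
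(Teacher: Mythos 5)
Your high-level plan uses the right ingredients (Lemma~\ref{lemma:(0,0)} for the $u$-part, Lemma~\ref{lemm1} to seed, Lemma~\ref{vlemm2} and~\ref{lemm3} plus comparison to propagate), but the propagation step has a genuine gap. You take a \emph{single} start time $T_1$ and a \emph{single} compactly supported subsolution $\underline V_e$ moving radially at a fixed speed $\tilde c\in(c_2,c_\varepsilon)$, seeded near radius $c'T_1$. At time $t$ its support is concentrated near radius $c'T_1+\tilde c(t-T_1)$. Since $\tilde c>c_2$, this radius eventually exceeds $c_2 t$: indeed $c'T_1+\tilde c(t-T_1)-c_2t = (\tilde c-c_2)t -(\tilde c-c')T_1\to+\infty$. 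In other words the fast bump \emph{overtakes and then leaves behind} every annular point $y$ with $c_1t\le|y|\le c_2t$. Your ``direct computation'' $|y|-c'T_1\le\tilde c(t-T_1)$ only verifies that the front of the bump has reached $y$; it does not verify the backward inequality $|y|\ge c'T_1+\tilde c(t-T_1)-(\rho+a)$ needed for $y$ to still be in the bulk, and for $t$ large that backward inequality fails. If instead you intend $\underline V_e$ to be a one-sided spreading subsolution (a growing ball on which it is near $\beta_\varepsilon$), then its support eventually covers the origin, which lies inside $B(0,c't)$ where $v$ is \emph{not} a supersolution of \eqref{veq1} — there $u$ need not be $\le\varepsilon$, and in fact $v\to 0$ near the origin by Lemma~\ref{lemma:sub} — so comparison cannot be applied.

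The paper closes this hole by a different device. It introduces \emph{two} families of subsolutions $\underline v_i(t,x;e)$ moving at speeds $c_1$ and $c_2$ (not at a speed exceeding $c_2$), each a compactly supported bump built from the Aronson--Weinberger function of Lemma~\ref{lemm3}. These pin down $v(t,\cdot)\ge\beta$ on the two spheres $|x|=cT+c_i(t-T)$, i.e.\ on the moving boundary $\partial\Pi_t$ of the annulus $\Pi_t$. It then observes that, because $u\le\varepsilon$ throughout $\Pi_t$ (by the $u$-upper bound and $c_1>c_u$), the constant $\beta<\beta_\varepsilon$ is a subsolution of the $v$-equation inside $\Pi_t$, and a parabolic maximum-principle argument in the space--time region $\bigcup_{t\ge T}\{t\}\times\Pi_t$ pushes the lower bound $\beta$ from the boundary into the interior. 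To repair your proof you would either have to adopt this maximum-principle step, or index the fast bumps by a continuum of start times $\tau$ (choosing for each $(t,y)$ the $\tau$ that makes the bump sit at radius $|y|$ at time $t$), while seeding slightly inside the region $\{|x|>c'\sigma\}$ and checking the bump stays there for all intermediate times — none of which is ``routine'' once spelled out.
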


\begin{proof}
	We still sketch the proof by referring to that of \cite[Theorem~2.1]{DGM}. 
\begin{itemize}
\item Take small $\varepsilon>0$ such that $c_{\varepsilon}\in (c_2,c_{v})$ and take  $c\in (c_{2}, c_{\varepsilon})$. Define two family 
	of functions 
	\begin{equation*}
		\underline{v}_{i}(t, x; e) = 
		\begin{cases}
			\beta & \text{if } |x-c_{i}te|<\rho,\\
			\hat{V}(|x-c_{i}te|-\rho) & \text{if } \rho\le 
			|x-c_{i}te|\le \rho+a,\\
			0 & \text{if } |x-c_{i}te| > \rho+a,
		\end{cases} 
	\end{equation*}
	where $i=1, 2$, $e\in \mathbb{S}^{N-1}$ and $\beta\ge \beta(c)$ is
	defined by Lemma \ref{lemm3} and $\rho>0$ is sufficiently large. Lemma \ref{lemm1} implies that there exists $T>0$ such
	that $v(T, x)> \beta$
	for all $cT-\rho-a\le |x|\le cT+\rho+a$. Then defining $T_{i}=\frac
	{c}{c_{i}}T> T$, one gets
	$v(T, x)\ge \underline v_i(T_{i}, x; e)$ due to 
	the definition of $\underline v_i(t_{i}, x, e)$
	for $i=1, 2$ and $e\in \mathbb{S}^{N-1}$. 
	
	\item One can check that $\underline{v}_{i}(t, x; e)$ are subsolutions of \eqref{veq1} for $t\ge 0$ and $x\in\mathbb{R}^N$. It follows from the comparison principle that
	\begin{equation*}
		v(t+T, x)\ge \underline {v}_i(t+T_{i}, x; e).
	\end{equation*}
	Thus, $v(t,x)\ge \beta$ for $t\ge T$ and $x\in   \partial \Pi_{t}$ where
	$$\Pi_{t}:=\left\lbrace x\in \mathbb{R}^{N}|~
	cT + c_{1}(t - T ) \le |x|\le cT + c_{2}(t - T )\right\rbrace.$$
	Then, the maximum principle implies that 
	\[
	\liminf_{t\to+\infty}\inf_{c_{1}t+(c-c_{1})T \le |x|\le c_{2}t+
		(c-c_{2})T} v(t, x)\ge \beta.
	\]
	
	\item By slightly 
	modifying $c_{1}$ and $c_{2}$, one has
	$$
	\liminf_{t\to+\infty}\inf_{c_{1}t\le |x|\le c_{2}t} v(t, x)\ge \beta.
	$$
	Since $\varepsilon$ can be chosen  arbitrarily small and $\beta$ can 
	arbitrarily close to $1-b\varepsilon$, it derives
	\[
	\liminf_{t\to+\infty}\inf_{c_{1}t\le |x|\le c_{2}t} v(t, x)\ge 1.
	\]
\end{itemize}	
\end{proof}

For the case $c_{u}>c_{v}$,  we only have to switch the roles of $ u$ and $v$ and adapt the same proofs.
 
\begin{corollary}\label{cu2}
Assume $c_{u}>c_{v}$. For any given $c_{1}, c_{2}$ satisfying $c_{v}<c_{1}<c_{2}<c_{u}$,
	 the 	solution $(u,v)$ has 
	the following spreading property
	\begin{equation}
		\lim_{t\to +\infty}\sup_{c_{1}t\le |x|\le c_{2}t}\left\lbrace |u(t, x)-1|
		+|v(t, x)|\right\rbrace =0.
	\end{equation}
\end{corollary}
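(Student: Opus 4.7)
The plan is to adapt the proofs of Lemmas~\ref{lemm1}--\ref{lemm2} by swapping the roles of $u$ and $v$, exploiting the symmetric structure of the system~\eqref{pro1} together with assumption \textbf{(A3)} on $u$. Since $c_{1}>c_{v}$, the convergence $v(t,x)\to 0$ uniformly for $c_{1}t\le|x|\le c_{2}t$ is already contained in Lemma~\ref{lemma:(0,0)}, so the whole content of the corollary reduces to proving the uniform convergence $u\to 1$ on that annulus. Fix a small $\varepsilon>0$ with $a\varepsilon<1$ chosen so that the reduced Fisher-KPP speed $c_{u}^{\varepsilon}:=2\sqrt{dr(1-a\varepsilon)}$ still exceeds $c_{2}$. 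By Lemma~\ref{lemma:(0,0)}, there is $X_{\varepsilon}>0$ with $v(t,x)\le\varepsilon$ whenever $|x|\ge X_{\varepsilon}+c_{v}t$; on that region $u$ is a supersolution of the scalar Fisher-KPP equation $w_{t}=d\Delta w+rw(1-a\varepsilon-w)$, whose minimal wave speed is $c_{u}^{\varepsilon}$ and whose stable state $1-a\varepsilon$ tends to $1$ as $\varepsilon\to 0$.

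Next I would establish the $u$-analog of Lemma~\ref{lemm1}: for every $c\in(c_{v},c_{u})$,
\[
\lim_{t\to+\infty}u(t,x+cte)=1
\]
locally uniformly in $x$ and uniformly in $e\in\mathbb{S}^{N-1}$. The construction mirrors that of Lemma~\ref{lemm1}: pick $c'\in(c,c_{u}^{\varepsilon})$, use the principal Dirichlet eigenpair on a large ball to build an exponentially-weighted, compactly supported subsolution of the reduced Fisher-KPP equation travelling with speed $c'$, and apply the comparison principle on the set $\{|x|\ge X_{\varepsilon}+c_{v}t\}$ where the single-species reduction is valid. Assumption \textbf{(A3)} provides the required seed, since it forces $u$ to be uniformly close to $1$ on any fixed ball once $t$ is large enough. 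A compactness argument along sequences $t_{n}\to+\infty$, $e_{n}\to e_{\infty}$, combined with the hair-trigger effect for Fisher-KPP, then delivers the pointwise limit, exactly as in the cited proof.

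Finally, I would mimic Lemma~\ref{lemm2} to upgrade this pointwise statement to uniform convergence on the annulus. Using the $u$-version of Lemma~\ref{lemm3} for the ODE with parameters $(d,r)$ and reaction $rw(1-w-a\varepsilon)$, one produces for $i=1,2$ and each $e\in\mathbb{S}^{N-1}$ travelling annular subsolutions $\underline{u}_{i}(t,x;e)$ of the same piecewise shape used earlier, with $\beta$ close to $1-a\varepsilon$ and $\rho$ large. The previous step yields a time $T$ at which $u(T,\cdot)\ge\beta$ on the relevant starting annuli; comparison gives $u\ge\beta$ on the lateral boundary $\partial\Pi_{t}$, and the parabolic maximum principle propagates this bound inside $\Pi_{t}$. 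Letting $\beta\uparrow 1-a\varepsilon$ and then $\varepsilon\to 0$ yields $\liminf_{t\to+\infty}\inf_{c_{1}t\le|x|\le c_{2}t}u(t,x)\ge 1$, which combined with $u\le 1$ completes the proof.

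The main obstacle, as in Lemmas~\ref{lemm1}--\ref{lemm2}, is confining the travelling subsolutions to the region $\{|x|\ge X_{\varepsilon}+c_{v}t\}$ where the single-species reduction $v\le\varepsilon$ holds. Since the annular centres $c_{i}te$ satisfy $c_{i}>c_{v}$, the subsolutions remain well inside this region for $t$ large, so this difficulty is handled exactly as in the cited proofs.
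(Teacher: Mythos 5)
Your proposal follows precisely the route the paper intends: the paper gives no independent proof of this corollary, merely remarking that one should ``switch the roles of $u$ and $v$ and adapt the same proofs'' of Lemmas~\ref{lemma:(0,0)}, \ref{lemm1} and~\ref{lemm2}, which is exactly your plan (exponential supersolution for $v$ beyond $c_v$, reduction to the scalar KPP equation $w_t=d\Delta w+rw(1-a\varepsilon-w)$ on $\{|x|\ge X_\varepsilon+c_vt\}$, pointwise convergence via a travelling compactly supported subsolution plus a Liouville/hair-trigger argument, then the annular subsolutions of Lemma~\ref{lemm3} to upgrade to uniform convergence on $\{c_1t\le|x|\le c_2t\}$). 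One small remark: invoking \textbf{(A3)} for the seed is more than is actually needed and is not how Lemma~\ref{lemm1} itself is seeded; there the positivity of the competitor at a fixed positive time (strong maximum principle) suffices, and that is the cleaner analogue here since \textbf{(A3)} gives convergence only on fixed balls while the subsolution's centre escapes to infinity, creating a timing mismatch one would otherwise have to address. Since \textbf{(A3)} is assumed in the ambient theorem this does not make the argument wrong, but the intrinsic seed is preferable.
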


We are now able to complete the proofs of Theorems \ref{thm2} and \ref{thm3}.

\begin{proof}[Proof of Theorem \ref{thm2}.]
Since \textbf{(A3)} is satisfied, there is $T>0$ such that $u(T,x)\ge 1-\delta$ and $v(T,x)\le \delta$ for $x\in B(0,\rho)$ where $\delta$ and $\rho$ are given by Lemma~\ref{lemma:sub}. By the comparison principle and Lemma~\ref{lemma:sub}, one has that
\begin{equation}\label{limcuv}
\lim_{t\to +\infty}\sup_{|x|\le ct} |u(t, x)-1| =0,\, \lim_{t\to +\infty}\sup_{|x|\le ct} |v(t, x)| =0\quad \text{for}~ 0<c<c_{uv}.
\end{equation}
We now show that for any $\varepsilon\in (0,c_u)$,
\begin{equation}\label{convergence-cu}
(u(t,x),v(t,x))\rightarrow (1,0)~\text{
		uniformly in}~\left\lbrace x\in \mathbb{R}^{N}; |x|\le 
	(c_{u}-\varepsilon)t\right\rbrace ~\text{as}~t\to +\infty.
	\end{equation}

Take $0<\varepsilon<\min\{c_{uv},c_u,c_v,(c_u-c_v)/2\}$. By Corollary~\ref{cu2}, there is $T_1>0$ such that 
$$\frac{T_1(c_u-c_v-2\varepsilon)}{2}> \rho,$$
and
\begin{equation}\label{T1}
u(t,x)\ge 1-\delta,\, v(t,x)\le \delta, \hbox{ for any $t\ge T_1$ and  $(c_v+\varepsilon)t\le |x|\le (c_u-\varepsilon)t$}.
\end{equation}
Then, for any $e\in\mathbb{R}^N$ and any $x_0\in \{x\in\mathbb{R}^N:\,(c_v+\varepsilon)T_1+\rho \le |x|\le (c_u-\varepsilon)T_1-\rho\}$, one has
$$u(T_1+t,x)\ge \underline{u}_{\rho}(t,x-x_0), \, v(T_1+t,x)\le \overline{v}_{\rho}(t,x-x_0),\quad \hbox{for $t\ge 0$ and $x\in\mathbb{R}^N$}.$$
Let 
$$\tau:=\frac{(c_u-c_v-2\varepsilon)T_1 -2\rho}{c_{v} +\varepsilon}.$$
By  Lemma~\ref{lemma:sub}, one has that $u(T_1+\tau,x)\ge 1-\delta$, $v(T_1+\tau,x)\le \delta$,  for  $(c_v+\varepsilon)T_1+\rho \le |x|\le (c_u-\varepsilon)T_1-\rho$. Moreover, by \eqref{T1}, $u(T_1+\tau,x)\ge 1-\delta$, $v(T_1+\tau,x)\le \delta$,  for  $(c_u-\varepsilon)T_1-2\rho \le |x|\le (c_u-\varepsilon)(T_1+\tau)$. That is,
\begin{equation*} 
u(t,x)\ge 1-\delta,\, v(t,x)\le \delta, \hbox{ for  $ T_1\le t\le T_1+\tau$ and  $(c_v+\varepsilon)T_1\le |x|\le (c_u-\varepsilon)t$}.
\end{equation*}

By induction, one can show that
\begin{equation*} 
u(t,x)\ge 1-\delta,\, v(t,x)\le \delta, \hbox{ for  $ T_1\le t\le T_1+n\tau$ and  $(c_v+\varepsilon)T_1\le |x|\le (c_u-\varepsilon)t$},
\end{equation*}
for any $n\in\mathbb{N}$. Combined with \eqref{limcuv}, it follows that
\begin{equation*} 
u(t,x)\ge 1-\delta,\, v(t,x)\le \delta, \hbox{ for all $ t\ge T_1$ and  $|x|\le (c_u-\varepsilon)t$}.
\end{equation*}
By the arbitrariness of $\delta_0$, one gets \eqref{convergence-cu}.

Then, the conclusion of Theorem \ref{thm2} follows from \eqref{convergence-cu} and Lemma~\ref{lemma:(0,0)}.
\end{proof}

\begin{proof}[Proof of Theorem~\ref{thm3}]
 Similar as Theorem~\ref{thm2}, one can easily show \eqref{limcuv}. Then, by Lemma~\ref{lemma:(0,0)}, we only have to show that
\begin{equation}\label{convergence-cuv-cv}
\lim_{t\to +\infty}\sup_{c_{1}t\le|x|\le c_{2}t}
\left\lbrace |u(t, x)| +|v(t, x)-1|\right\rbrace 
=0,~\text{for}~
c_{uv}<c_{1}\le c_{2}<c_{v}.
\end{equation}

Take $0<\varepsilon<\min\{c_{uv},c_u,c_v,(c_u-c_v)/2\}$ small enough. Let  $R_{\varepsilon}$ and $T_{\varepsilon}$ be given by Lemma~\ref{lem1}. By Lemma~\ref{lemm2}, there is $T>0$ such that 
$$(c_v-c_u-2\varepsilon)T/2\ge R_{\varepsilon},\quad \tau:=\frac{(c_v-c_u-2\varepsilon)T-R_{\varepsilon}}{c_u+c_{uv}+2\varepsilon}\ge \max\left\{T_{\varepsilon},\frac{R_{\varepsilon}}{\varepsilon}\right\},$$
and
\begin{equation}\label{trapcucv}
u(t,x)\le \delta,\, v(t,x)\ge 1-\delta, \hbox{ for any $t\ge T$ and  $(c_u+\varepsilon)t\le |x|\le (c_v-\varepsilon)t$}.
\end{equation}
Let $R:= (c_v-c_u-2\varepsilon)T/2\ge R_{\varepsilon}$.
Then, for any $e\in\mathbb{R}^N$ and any $x_0\in \mathbb{R}^N$ such that $ |x_0|= (c_v+c_u)T/2$, notice that $(c_u+\varepsilon)T\le |x|\le (c_v-\varepsilon)T$ for $x\in B(x_0,R)$ and one has
$$u(T+t,x)\le \overline{u}_{R}(t,x-x_0), \, v(T+t,x)\ge \underline{v}_{R}(t,x-x_0),\quad \hbox{for $t\ge 0$ and $x\in\mathbb{R}^N$}.$$
By Lemma~\ref{lem1}, one has that $u(T+t,x)\le 2\delta$, $v(T+t,x)\ge 1-2\delta$ for $0\le t\le \tau$ and $|x-x_0|\le R-R_{\varepsilon}-(c_{uv}+\varepsilon) t$. In particular, $u(T+\tau,x)\le \delta$, $v(T+\tau,x)\ge 1-\delta$, for $|x-x_0|\le R-R_{\varepsilon}-(c_{uv}+\varepsilon) \tau$.
Since $x_0$ could be any point such that $|x_0|=(c_v+c_u)T/2$, one further gets that
$u(T+t,x)\le 2\delta$, $v(T+t,x)\ge 1-2\delta$ for $0\le t\le \tau$ and $(c_u+\varepsilon)T+R_{\varepsilon}+(c_{uv}+\varepsilon) t\le |x|\le (c_v-\varepsilon)T-R_{\varepsilon}-(c_{uv}+\varepsilon)t$. In particular, $u(T+\tau,x)\le \delta$, $v(T+\tau,x)\ge 1-\delta$ for $(c_u+\varepsilon)T+R_{\varepsilon}+(c_{uv}+\varepsilon) \tau\le |x|\le (c_v-\varepsilon)T-R_{\varepsilon}-(c_{uv}+\varepsilon)\tau$. Moreover, notice that $(c_u+\varepsilon)(T+\tau)=(c_v-\varepsilon)T-R_{\varepsilon}-(c_{uv}+\varepsilon)\tau$ and hence,  
\begin{equation}\label{tau2delta}
\begin{aligned}
u(t,x)\le 2\delta,\, v(t,x)\ge 1-2\delta,
\end{aligned}
\end{equation}
for any $T\le t\le T+\tau$ and  $(c_u+\varepsilon)T+R_{\varepsilon}+(c_{uv}+\varepsilon) (t-T)\le |x|\le (c_v-\varepsilon)t$, by \eqref{trapcucv}. In particular, 
\begin{equation}\label{taudelta0}
\begin{aligned}
u(T+\tau,x)\le \delta,\, v(T+\tau,x)\ge 1-\delta,
\end{aligned}
\end{equation}
for  $(c_u+\varepsilon)T+R_{\varepsilon}+(c_{uv}+\varepsilon) \tau\le |x|\le (c_v-\varepsilon)(T+\tau)$.

By applying the above arguments to \eqref{taudelta0} again, one can get that \eqref{tau2delta} holds for any $T+\tau\le t\le T+2\tau$ and  $(c_u+\varepsilon)T+2R_{\varepsilon}+(c_{uv}+\varepsilon) (t-T)\le |x|\le (c_v-\varepsilon)t$ and in particular, 
\begin{equation*}
\begin{aligned}
u(T+2\tau,x)\le \delta,\, v(T+2\tau,x)\ge 1-\delta,
\end{aligned}
\end{equation*}
for  $(c_u+\varepsilon)T+2R_{\varepsilon}+2(c_{uv}+\varepsilon) \tau\le |x|\le (c_v-\varepsilon)(T+2\tau)$. By induction, it follows that \eqref{tau2delta} holds
for any $T+(m-1)\tau\le t\le T+m\tau$ and  $(c_u+\varepsilon)T+mR_{\varepsilon}+(c_{uv}+\varepsilon) (t-T)\le |x|\le (c_v-\varepsilon)t$ with $m\in \mathbb{N}$. Clearly, $(m-1)R_{\varepsilon}\le \varepsilon (m-1)\tau\le \varepsilon(t-T)$ for $T+(m-1)\tau\le t\le T+m\tau$. Therefore, \eqref{tau2delta} holds for any $t\ge T$ and  $(c_u+\varepsilon)T+R_{\varepsilon}+(c_{uv}+\varepsilon) (t-T)\le |x|\le (c_v-\varepsilon)t$. By arbitrariness of $\delta$, one has gets \eqref{convergence-cuv-cv}.

This completes the proof.
\end{proof}

\section{Proofs for scenario \textbf{(C2)}.}\label{sec4}
In this section, we prove Theorems \ref{thm2.4} and \ref{ASSset}. We only have to prove Theorem \ref{ASSset} since Theorem \ref{ASSset} is the uniform convergence of Theorem \ref{thm2.4}. We  denote the solution of \eqref{pro1} with scenario \textbf{(C1)} by $(u(t,x),v(t,x))$ in the sequel. 

We can immediately get the following lemma through Lemma~\ref{lemma:sub}.

\begin{lemma}\label{usco1}
	Assume that $U_{\rho}\neq \emptyset$ with 	$\rho>0$ given by Proposition~\ref{proposition1}. Then it holds that 
	\[
	\forall~ c\in (0, c_{uv}), ~~\inf_{x\in U_{\rho}+B_{ct}}
	u(t, x)\to 1.
	\]
\end{lemma}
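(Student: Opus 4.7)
The plan is to apply Lemma~\ref{lemma:sub} recentered at an arbitrary point $y\in U_{\rho}$ via the translation invariance of the system, and then take an infimum over all such $y$. Fix $c\in(0,c_{uv})$ and choose $\varepsilon>0$ so small that $c<c_{uv}-\varepsilon$. For every $y\in U_{\rho}$ the ball $B(y,\rho)$ is contained in $U$, so under scenario \textbf{(C2)} the initial data satisfy $u_{0}\equiv 1$ and $v_{0}\equiv 0$ on $B(y,\rho)$, while $u_{0}\ge 0$ and $v_{0}\le 1$ elsewhere in $\mathbb{R}^{N}$. In particular, comparing with the initial condition used in Lemma~\ref{lemma:sub} (translated by $y$),
$$u_{0}(x)\ge \underline u_{\rho}(0,x-y),\qquad v_{0}(x)\le \overline v_{\rho}(0,x-y),\qquad x\in\mathbb{R}^{N}.$$

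Next I would invoke the competitive comparison principle for \eqref{pro1}: since the reaction is quasimonotone with respect to the partial order $(u_{1},v_{1})\preceq(u_{2},v_{2})\iff u_{1}\le u_{2}$ and $v_{1}\ge v_{2}$ (equivalently, the change of variable $\tilde v=1-v$ produces a cooperative system), the ordering above persists for all $t\ge 0$. Consequently,
$$u(t,x)\ge \underline u_{\rho}(t,x-y),\qquad v(t,x)\le \overline v_{\rho}(t,x-y),$$
for every $t\ge 0$, $x\in\mathbb{R}^{N}$ and $y\in U_{\rho}$.

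By Lemma~\ref{lemma:sub} the function $\underline u_{\rho}(t,z)$ converges to $1$ as $t\to+\infty$ uniformly on $\{|z|\le(c_{uv}-\varepsilon)t\}$, and because the lemma is radial in $z$, the rate of convergence is independent of $y$. Hence for any $\eta>0$ there exists $T=T(\eta)>0$ such that whenever $t\ge T$, $y\in U_{\rho}$ and $|x-y|\le ct$, one has $u(t,x)\ge 1-\eta$. Taking the infimum over $x\in U_{\rho}+B_{ct}=\bigcup_{y\in U_{\rho}}(y+B_{ct})$ and letting $\eta\to 0$ yields the claim.

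The only non-cosmetic point is verifying that the one-sided comparison applies to the strongly competitive system, but this is standard once one notes that $f(u,v)=ru(1-u-av)$ is nonincreasing in $v$ and $g(u,v)=v(1-v-bu)$ is nonincreasing in $u$; the rest of the argument is simply a shift of Lemma~\ref{lemma:sub} and taking an infimum, so I do not expect any substantive obstacle.
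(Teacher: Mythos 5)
Your proof is correct and follows essentially the same strategy as the paper: translate the sub/supersolution pair $(\underline u_\rho,\overline v_\rho)$ from Lemma~\ref{lemma:sub} by an arbitrary $y\in U_\rho$, apply the comparison principle for the competitive order $(u_1\le u_2,\,v_1\ge v_2)$, and exploit the uniformity in $y$ to pass to the infimum over $U_\rho+B_{ct}$. The only difference is a slight simplification on your side: you observe that under \textbf{(C2)} the initial data $(u_0,v_0)$ already dominate $(\underline u_\rho,\overline v_\rho)(0,\cdot-y)$ at $t=0$ (since $B(y,\rho)\subset U$ where $(u_0,v_0)=(1,0)$), so the comparison starts from time zero, whereas the paper first invokes Proposition~\ref{proposition1} to produce a waiting time $T$ before comparing; your route removes that detour without loss of rigor.
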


\begin{proof}
Let $\delta_0>0$, $\rho>0$ and $(\underline{u}_{\rho}(t,x),\overline{v}_{\rho}(t,x))$ be given by Lemma~\ref{lemma:sub}. Take any $x_0\in U_{\rho}$.	By Proposition~\ref{proposition1}, there is $T>0$ such that
$$u(T+t,x_0+x)\ge \underline{u}_{\rho}(t,x) \hbox{ and } v(T+t,x_0+x)\le \overline{v}_{\rho}(t,x),$$
for $t\ge 0$ and $x\in\mathbb{R}^N$. Then, the conclusion follows from Lemma~\ref{lemma:sub}.
\end{proof}

The following lemma is borrowed from \cite[Lemma~4.3]{HR} which concerns the sets $\mathcal{B}(U)$ and $\mathcal{U}(U)$.
 
 \begin{lemma}{\cite{HR}}\label{unlem}
 	Let $U$ be a non-empty subset of 
 	$\mathbb{R}^{N}$ satisfying (\ref{ABcon}) for some $\varrho>0$.
 	Then for every $e\in \mathcal{B}(U)$, there holds
 	\begin{equation}
 		\liminf_{\tau\to+\infty}\frac
 		{\text{d}(\tau e, A)}{\tau}=
 		\inf_{\xi\in \mathcal{U}(U), 
 			\xi\cdot e\ge 0}
 		\sqrt{1-(\xi\cdot e)^{2}}>0,
 	\end{equation}
 	with the convention that the right-hand side of the equality is 1 if there is no $\xi \in \mathcal{U}(U)$ 	satisfying $\xi\cdot e\ge 0$.
 \end{lemma}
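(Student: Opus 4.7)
The plan is to prove the equality by two matching inequalities together with the strict positivity claim. Set $L:=\liminf_{\tau\to+\infty}\text{dist}(\tau e, U)/\tau$, which is positive by $e\in\mathcal{B}(U)$ and bounded above by $1$, since $|\tau e-y_{0}|/\tau\to 1$ for any fixed $y_{0}\in U$.

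For the upper bound $L\le \sqrt{1-(\xi\cdot e)^{2}}$ with $\xi\in\mathcal{U}(U)$, $\xi\cdot e\ge 0$, I would use the orthogonal projection of $\tau e$ onto the ray $\mathbb{R}^{+}\xi$, namely $s^{*}\xi$ with $s^{*}=\tau(\xi\cdot e)\ge 0$, noting $|\tau e-s^{*}\xi|=\tau\sqrt{1-(\xi\cdot e)^{2}}$. When $\xi\cdot e>0$ one has $s^{*}\to+\infty$, so by definition of $\mathcal{U}(U)$ there is $y_{\tau}\in U$ with $|y_{\tau}-s^{*}\xi|=o(s^{*})=o(\tau)$; the triangle inequality yields $\text{dist}(\tau e, U)/\tau\le\sqrt{1-(\xi\cdot e)^{2}}+o(1)$. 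The case $\xi\cdot e=0$ reduces to the trivial bound $L\le 1$. Taking infima produces $L\le\inf_{\xi}\sqrt{1-(\xi\cdot e)^{2}}$.

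For the lower bound, I would pick $\tau_{n}\to+\infty$ and $y_{n}\in U$ with $|y_{n}-\tau_{n}e|/\tau_{n}\to L$. Since $|y_{n}|/\tau_{n}\le 1+L$, extract a subsequence with $v_{n}:=y_{n}/\tau_{n}\to v$ satisfying $|v-e|=L$. If $v\ne 0$, set $\alpha:=|v|>0$ and $\xi^{*}:=v/|v|$; expanding $|v-e|^{2}=L^{2}$ gives $\alpha^{2}-2\alpha(\xi^{*}\cdot e)+1=L^{2}$, whose discriminant in $\alpha$ being nonnegative is equivalent to $\sqrt{1-(\xi^{*}\cdot e)^{2}}\le L$. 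Examining the roots $\alpha=(\xi^{*}\cdot e)\pm\sqrt{(\xi^{*}\cdot e)^{2}+L^{2}-1}$ shows that positivity of $\alpha$ combined with $L\le 1$ forces $\xi^{*}\cdot e\ge 0$. The degenerate case $v=0$ forces $L=1$, which is consistent with the convention that the right-hand side equals $1$ when the admissible set is empty.

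The main obstacle is to upgrade the limiting direction $\xi^{*}$ to genuine membership in $\mathcal{U}(U)$, and this is precisely where \eqref{ABcon} enters. Since $y_{n}\in U$, $|y_{n}|\to+\infty$, and $y_{n}/|y_{n}|\to\xi^{*}$, one has $\text{dist}(|y_{n}|\xi^{*}, U)\le|y_{n}-|y_{n}|\xi^{*}|=o(|y_{n}|)$, hence $\liminf_{s\to+\infty}\text{dist}(s\xi^{*}, U)/s=0$, so $\xi^{*}\notin\mathcal{B}(U)$; the hypothesis \eqref{ABcon} then forces $\xi^{*}\in\mathcal{U}(U_{\rho})\subset\mathcal{U}(U)$. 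Without \eqref{ABcon} one would only control a liminf along a subsequence, which is strictly weaker than $\mathcal{U}$-membership. For the positivity claim, the set $\{\xi\in\mathcal{U}(U):\xi\cdot e\ge 0\}$ is compact (as $\mathcal{U}(U)$ is closed in $\mathbb{S}^{N-1}$) and does not contain $e$ (since $\mathcal{B}(U)\cap\mathcal{U}(U)=\emptyset$ and $e\in\mathcal{B}(U)$), so $\xi\cdot e<1$ uniformly on this compact set, giving $\inf\sqrt{1-(\xi\cdot e)^{2}}>0$.
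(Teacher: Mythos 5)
The paper does not prove this lemma at all---it is quoted verbatim from Hamel--Rossi \cite[Lemma~4.3]{HR} (the occurrences of $A$ and $\varrho$ in the statement are typos carried over for $U$ and $\rho$), so there is no in-paper argument to compare yours against.

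Your self-contained reconstruction is correct. The upper bound by orthogonal projection onto $\mathbb{R}^{+}\xi$, using $s^{*}=\tau(\xi\cdot e)\to+\infty$ when $\xi\cdot e>0$ and falling back on the trivial bound $L\le 1$ when $\xi\cdot e=0$, is sound. The lower bound via the converging renormalized sequence $v_{n}=y_{n}/\tau_{n}\to v$, with the discriminant of the quadratic $\alpha^{2}-2\alpha(\xi^{*}\cdot e)+1-L^{2}=0$ forcing $L\ge\sqrt{1-(\xi^{*}\cdot e)^{2}}$ and the sign analysis of the roots forcing $\xi^{*}\cdot e\ge 0$ under $L\le 1$, works. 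You correctly identified the one nonobvious step, namely that a subsequential decay of $\mathrm{dist}(s\xi^{*},U)/s$ only rules out $\xi^{*}\in\mathcal{B}(U)$, and that \eqref{ABcon} together with $\mathcal{U}(U_{\rho})\subset\mathcal{U}(U)$ is what upgrades this to $\xi^{*}\in\mathcal{U}(U)$. The positivity argument, using closedness of $\mathcal{U}(U)$, disjointness of $\mathcal{B}(U)$ and $\mathcal{U}(U)$, and compactness of $\{\xi\in\mathcal{U}(U):\xi\cdot e\ge 0\}$, is also right. Two tiny points you may wish to polish: since $U$ is merely measurable, the distance $\mathrm{dist}(\tau_{n}e,U)$ need not be attained, so pick $y_{n}\in U$ with $|y_{n}-\tau_{n}e|\le\mathrm{dist}(\tau_{n}e,U)+1/n$; and in the degenerate case $v=0$ the conclusion $L=1\ge\mathrm{RHS}$ holds regardless of whether the admissible set is empty, so the appeal to the convention there is not actually needed.
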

 
We establish a set where $(u,v)$ converges to $(0,1)$ by using a family of supersolutions provided by Lemma~\ref{lem1}. 
 
 \begin{lemma}\label{unle1}
 	Assume that 	$U\neq \emptyset$ satisfies \eqref{ABcon}. Assume that there exists $e\in \mathcal{B}(U)$, then for any $w
 	>w(e)$, where $w(e)$ is given by (\ref{unuve}), in the cone
 	\[
 	\mathcal{C}:=\bigcup_{\lambda>1}
 	B(\lambda w e,c_{uv}(\lambda-1)),
 	\]
 	there holds that 
 	\[
 	\sup_{x\in \mathcal{C}} u(t, tx)\to 0 ~\text{and}~
 	\inf_{x\in \mathcal{C}} v(t, tx)\to 1~~\text{as}~
 	t\to +\infty.
 	\]
 \end{lemma}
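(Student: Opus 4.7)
The strategy is to place, for each $x\in\mathcal{C}$, a large ball $B(\tau e,R_\tau)$ (with $\tau$ and $R_\tau$ scaling linearly in $t$) that is disjoint from $U$, so that our initial data is pointwise dominated (in the competitive sense) by that of Lemma~\ref{lem1}; the supersolution bounds of Lemma~\ref{lem1} then transfer to $(u,v)$.

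First, I would translate the hypothesis $w>w(e)$ into a geometric statement about $U$. Since $e\in\mathcal{B}(U)$, Lemma~\ref{unlem} gives
$$\liminf_{\tau\to+\infty}\frac{d(\tau e,U)}{\tau}=\inf_{\xi\in\mathcal{U}(U),\,\xi\cdot e\ge 0}\sqrt{1-(\xi\cdot e)^2}=\frac{c_{uv}}{w(e)}.$$
Since $w>w(e)$, I can fix $\alpha$ with $c_{uv}/w<\alpha<c_{uv}/w(e)$, so that there exists $\tau_0>0$ with $B(\tau e,\alpha\tau)\cap U=\emptyset$ for all $\tau\ge\tau_0$.

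Next, fix arbitrary $\delta\in(0,\delta_0]$ and $\varepsilon\in(0,\alpha w-c_{uv})$, and let $R_\varepsilon,T_\varepsilon$ be given by Lemma~\ref{lem1}. For $\tau\ge\max\{\tau_0,R_\varepsilon/\alpha\}$, on the ball $B(\tau e,\alpha\tau)$ we have $u_0\equiv 0$ and $v_0\equiv 1$, while outside we only use $u_0\le 1$ and $v_0\ge 0$. Comparing with the initial data of Lemma~\ref{lem1} (translated by $\tau e$, with radius $R=\alpha\tau$), the competitive comparison principle for \eqref{pro1} yields
$$u(t,x)\le \overline{u}_{\alpha\tau}(t,x-\tau e),\quad v(t,x)\ge \underline{v}_{\alpha\tau}(t,x-\tau e)\quad\text{for all }t\ge 0,\,x\in\mathbb{R}^N.$$
Hence, by Lemma~\ref{lem1}, $u(t,x)\le\delta$ and $v(t,x)\ge 1-\delta$ on the space-time region
$$\mathcal{R}_\tau:=\Bigl\{(t,x):T_\varepsilon\le t\le \tfrac{\alpha\tau-R_\varepsilon}{c_{uv}+\varepsilon},\ |x-\tau e|\le \alpha\tau-R_\varepsilon-(c_{uv}+\varepsilon)t\Bigr\}.$$

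The key step is a scaling choice that covers $t\mathcal{C}$ uniformly. For any $x\in\mathcal{C}$, write $x=\lambda w e+y$ with $\lambda>1$ and $|y|<c_{uv}(\lambda-1)$, and choose $\tau=\lambda w t$. Then $|tx-\tau e|=t|y|<c_{uv}(\lambda-1)t$, so the inclusion $(t,tx)\in\mathcal{R}_\tau$ boils down to
$$c_{uv}(\lambda-1)t\le \alpha\lambda w t-R_\varepsilon-(c_{uv}+\varepsilon)t\quad\Longleftrightarrow\quad \lambda(\alpha w-c_{uv})\ge \varepsilon+\tfrac{R_\varepsilon}{t},$$
together with $\tau\ge\max\{\tau_0,R_\varepsilon/\alpha\}$, i.e.\ $\lambda wt\ge\max\{\tau_0,R_\varepsilon/\alpha\}$. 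Both conditions are uniform in $\lambda>1$: the inequality is automatic for $t\ge R_\varepsilon/(\alpha w-c_{uv}-\varepsilon)$ since $\lambda(\alpha w-c_{uv})>\alpha w-c_{uv}>\varepsilon$; and $\lambda wt\ge wt$ handles the other requirement as soon as $t\ge\max\{\tau_0/w,R_\varepsilon/(\alpha w)\}$. The upper bound on $t$ in $\mathcal{R}_\tau$ is the same inequality rearranged and so poses no extra constraint.

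Therefore, for $t$ sufficiently large (depending only on $\delta,\varepsilon$), $\sup_{x\in\mathcal{C}}u(t,tx)\le\delta$ and $\inf_{x\in\mathcal{C}}v(t,tx)\ge 1-\delta$, and letting $\delta\to 0$ finishes the argument. The only delicate point is verifying the uniformity over $\lambda>1$, including both the degenerate regime $\lambda\to 1^+$ (where the admissible ball around $\lambda we$ in $\mathcal{C}$ shrinks but so does the required slack) and $\lambda\to\infty$ (where both sides of the key inequality scale linearly in $\lambda$); in the end this uniformity is handled cleanly by the simple observation that $\lambda(\alpha w-c_{uv})$ stays above the positive constant $\alpha w-c_{uv}$ for every $\lambda>1$.
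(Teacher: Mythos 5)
Your proof is correct and follows essentially the same route as the paper: invoke Lemma~\ref{unlem} to pick a slope $\alpha$ (the paper's $k$) with $c_{uv}/w<\alpha<c_{uv}/w(e)$ so that $B(\tau e,\alpha\tau)\cap U=\emptyset$ for large $\tau$, compare $(u,v)$ there with the supersolution pair $(\overline{u}_{\alpha\tau},\underline{v}_{\alpha\tau})$ from Lemma~\ref{lem1}, and verify via the scaling $\tau=\lambda w t$ that the ray $tx$ stays inside the estimated region of Lemma~\ref{lem1} uniformly in $\lambda>1$. The only cosmetic differences from the paper are that you use the $\delta$-estimate \eqref{delta2} (requiring $t\ge T_\varepsilon$) rather than the $2\delta$-estimate \eqref{delta1}, and a marginally different range for $\varepsilon$; both are immaterial.
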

 \begin{proof}
Since $w>w(e)>0$, there exists a real number $k$ satisfying 
 	\[
 	0<\frac{c_{uv}}{w}<k<\frac{c_{uv}}{w(e)}=\inf_{\xi\in \mathcal{U}(U), 
 		\xi\cdot e\ge 0}
 	\sqrt{1-(\xi\cdot e)^{2}}.
 	\] 
Then, by Lemma \ref{unlem},  there 
 	exists $\tau_{1}>0$ such that
 	\[
 	\forall \tau\ge \tau_{1}, ~\text{dist}~(\tau e, U)\ge 
 	k\tau.
 	\] 
 	Take $\varepsilon\in (0,(kw-c_{uv})/2)$. Let $\delta_0$ and $R_{\varepsilon}$ given by Lemma \ref{lem1}. 
For $\tau>\tau_1 +R_{\varepsilon}/k$, notice that $B(\tau e, k\tau) \subset \mathbb{R}^N\setminus U$. Then, we can compare $(u(T,\tau e +x),v(T,\tau e +x))$ with $\overline{u}_{k\tau}(T, x)$ and 
 	$\underline{v}_{k\tau}(T, x)$ given by Lemma~\ref{lem1}. Therefore by the 
 	comparison principle, there holds that 
\begin{equation}\label{T2delta0}
u(T,\tau e +x)\le \overline{u}_{k\tau}(T, x)\le 2\delta_0,\, v(T,\tau e +x)\ge \underline{v}_{k\tau}(T, x)\ge 1-2\delta_0, 
\end{equation}
 	for $0\le T\le	\frac{k\tau-R_{\varepsilon}}{c_{uv}+\varepsilon}$ and $|x|\le k\tau-R_{\varepsilon}-(c_{uv}+\varepsilon)T$. For any $x\in \mathcal{C}$, there is $\lambda>1$ such that $x\in B(\lambda w e, c_{uv}(\lambda-1))$. Let $\tau=\lambda w t$. Obviously, $\frac{k\tau-R_{\varepsilon}}{c_{uv}+\varepsilon}>t$ for large $t$. Hence, 
 	$$|xt-\lambda \omega e t|\le  c_{uv}(\lambda -1)t\le \lambda (kw -\frac{2\varepsilon}{\lambda})t -c_{uv}t\le k\lambda w t -R_{\varepsilon} -(c_{uv}+\varepsilon)t,$$
 	for large $t$. By taking $T= t$ in \eqref{T2delta0}, one has that $u(t,xt)\le 2\delta_0$ and $v(t,xt)\ge 1-2\delta_0$. By arbitrariness of $\delta_0$, one gets the conclusion.
 \end{proof}

\begin{proof}[Proof of Theorem \ref{ASSset}]
Once we have Lemma \ref{usco1} and Lemma \ref{unle1} at hand, the rest proof is similar as that of  \cite[Theorem~2.2]{HR}. We finish the proof for completeness.

The proof for the geometric expression \eqref{expressionW} can be referred to the first paragraph of the proof of \cite[Theorem~2.2]{HR}. 

 By \eqref{ABcon}, it is clear 
	that 
	$
	\mathcal{U}(U)=\mathcal{U}(U_{\rho})$. Consider a 
	compact set $\mathcal{C}$ contained in 
	$\mathcal{W}$. For any $\xi\in
	\mathcal{U}(U)=\mathcal{U}(U_{\rho})$
	(if it exists), any $\tau>0$ and 
	$0<c<c'<c_{uv}$, from
	the defination of $\mathcal{U}(U_{\rho})$ in 
	Defination \ref{deuv}, it implies
	\[
	\frac{1}{t}\text{dist}\left( 
	t\tau\xi, U_{\rho}\right)\to +\infty,
	\]
	thus one gets $B(t\tau\xi,c't)
	\subset U_{\rho}+B_{ct}$ for
	sufficiently large $t$. Then by 
	Lemma \ref{usco1}, it yields
	\begin{equation}\label{convergence-c'}
		\inf_{x\in B(t\tau\xi,c't)}
		u(t, tx)\to 1,~~
		\sup_{x\in B(t\tau\xi,c't)}v(t, tx)
		\to 0.
	\end{equation}
	The limitation holds even for
	$\tau=0$  (without any reference 
	to $\xi$). Moreover, the expression \eqref{ABcon}
	implies that  any $x\in 
	\mathcal{W}$ is included either in 
	$B(0,c_{x}')$ or in $B(
	\tau_{x}\xi_{x},c_{x}')$ for certain 
	$c_{x}'\in (0, c_{uv})$ 
	, $\xi_{x}\in \mathcal{U}(U)$
	and $\tau_{x}>0$. Since compact set 
	$\mathcal{C}$ can be covered by 
	a finite number of such balls due to 
	its compactness, 	the first line of \eqref{28} is 
	concluded by \eqref{convergence-c'}.
	
	Now consdier a compact set $\mathcal{C}$ included in $\mathbb{R}^{N}\setminus\overline {\mathcal{W} }$. Take any point $y\in \mathcal{C}$ and let $e=y/|y|$. Then, they satisfy $w(e)<|y|<+\infty$ and hence, $e\in \mathcal{B}(U)$ because of the convention \eqref{unge}. As a	consequence, we can apply  Lemma 
	\ref{unle1} with $w\in (w(e), |y|)$. For 
	any small $\epsilon\in (0,1)$, we deduce
	that there is an open neighborhood $\mathcal{C}_{y}$ of $y$ and some $t_{y}>0$ such that 
	\[
	u(t, tx)<\epsilon, ~v(t, tx)>1-\epsilon,\quad \hbox{for }\forall ~t>t_{y}, ~x\in \mathcal{C}_{y}.
	\]
	As a conclusion, by a covering arguemnet, 
	we can find $t_{C}>0$ such that 
	\[
	u(t, tx)<\epsilon, ~v(t, tx)>1-\epsilon,\quad	\hbox{for }\forall ~t>t_{C}, ~x\in \mathcal{C}.
	\]
This proves the second line of \eqref{28}.
\end{proof}

\begin{proof}[Proof of Theorem~\ref{thm2.4}]
The first line of \eqref{26} is deduced exactly from the first line of \eqref{28}. For the second line of \eqref{26}, one only needs to consider the directions $e\in\mathcal{B}(U)$ since one necessarily has $w(e)<+\infty$. Then, such a limit immediately
follows by applying Lemma \ref{unle1} with $w\in (w(e),c)$.
\end{proof}

\section*{Acknowledgments}

This work is supported by the fundamental research funds for the central universities and NSF of China (No. 12101456, No. 12471201).
\vskip 0.3cm

\noindent
\textbf{Data availability} No data were generated or used during the study.
\vskip 0.3cm

\noindent
\textbf{Conflict of interest} On behalf of all authors, the corresponding author states that there is no Conflict of interest.


\end{document}